\documentclass[12pt,a4paper]{amsart}
\usepackage{amssymb}
\usepackage{amsthm}
\usepackage{mathtools} 
\usepackage[total={6.6in,9.2in}, top=1.5in, left=0.85in, includefoot]{geometry}
\usepackage[colorlinks=true, pdfstartview=FitV, linkcolor=blue, citecolor=black, urlcolor=black,filecolor=magenta]{hyperref}
\usepackage{graphicx,verbatim,amsmath,amssymb,subfigure,enumerate,import}

\usepackage{tikz}
\usepackage{tikz-cd}
\usetikzlibrary{matrix}
\tikzstyle{vertex}=[circle]

\allowdisplaybreaks

\def\R{\mathbb{R}}
\def\N{\mathbb{N}}
\def\Z{\mathbb{Z}}

\def\meets{\dashv}

\def\Om{\Omega}

\def\Osub{\Omega_\varphi}
\def\Ohh{\Omega_\mathrm{HH}}
\def\Oa{\Omega_\mathrm{a}}
\def\Ost{\Omega_{\mathrm{ST}}}
\def\Oe{\Omega_\epsilon}

\def\iOa{\Omega_{\emph{a}}}

\def\RI{\textbf{R1}}
\def\RII{\textbf{R2}}

\def\iRI{\emph{\textbf{R1}}}

\renewcommand{\mid}{:}

\newcommand{\ch}{\mathrm{ch}}

\newtheorem{thm}{Theorem}[section]
\newtheorem{cor}[thm]{Corollary}

\newtheorem{lemma}[thm]{Lemma}
\newtheorem{prop}[thm]{Proposition}

\theoremstyle{definition}
\newtheorem{definition}[thm]{Definition}

\theoremstyle{remark}
\newtheorem{remark}[thm]{Remark}

\newtheorem*{Acknowledgements}{Acknowledgements}
\newtheorem{remarks}[thm]{Remarks}

\numberwithin{equation}{section}

 \title[An aperiodic tile with edge-to-edge orientational matching rules]{An aperiodic tile with edge-to-edge orientational matching rules}

\author{James J. Walton}
\address{James J. Walton,  School of Mathematical Sciences, University of Nottingham, University Park, 
Nottingham NG7 2RD, United Kingdom}
\email{james.walton@nottingham.ac.uk}
\author[Michael F. Whittaker]{Michael F. Whittaker}
\address{Michael F. Whittaker, School of Mathematics and Statistics, University of Glasgow, University Place, Glasgow Q12 8QQ, United Kingdom}
\email{Mike.Whittaker@glasgow.ac.uk}
 
 \thanks{This research was partially supported by EPSRC grant EP/R013691/1.}
 
\keywords{aperiodic tilings; fractal; monotile; nonperiodic}
\subjclass[2010]{Primary: 52C23; Secondary: 37E25; 05B45}

\begin{document}

\begin{abstract}
We present a single, connected tile which can tile the plane but only non-periodically. The tile is hexagonal with edge markings, which impose simple rules as to how adjacent tiles are allowed to meet across edges. The first of these rules is a standard matching rule, that certain decorations match across edges. The second condition is a new type of matching rule, which allows tiles to meet only when certain decorations in a particular orientation are given the opposite charge. This forces the tiles to form a hierarchy of triangles, following a central idea of the Socolar--Taylor tilings. However, the new edge-to-edge orientational matching rule forces this structure in a very different way, which allows for a surprisingly simple proof of aperiodicity. We show that the hull of all tilings satisfying our rules is uniquely ergodic and that almost all tilings in the hull belong to a minimal core of tilings generated by substitution. Identifying tilings which are charge-flips of each other, these tilings are shown to have pure point dynamical spectrum and a regular model set structure.
\end{abstract}

\maketitle

\section{Introduction}

The fact that periodically arranged structures can be enforced by local rules is familiar to everyone. In covering the plane with unit squares so that squares must meet edge-to-edge, a periodic tessellation results. This simple principle of local constraints enforcing global structure explains how crystalline structures can form. Therefore, it was a great surprise to crystallographers in the 1980s when Dan Shechtman discovered a metal alloy whose diffraction pattern implied a great deal of structural order but had rotational symmetry precluding periodicity \cite{ShBlGrCa84}. Since the atomic organisation must still result from local interactions, the question arises of how such aperiodic patterns can result from only local rules. In the other direction, it is known that hierarchical aperiodic patterns generated by a substitution rule can be forced from local matching rules \cite{Goo98,Moz89,FerOll2010}.

Already in the 1960s, it had been observed by Robert Berger in solving Hao Wang's Domino Problem \cite{Wan61} that one may find square tiles with decorated edges that can tile the plane but only non-periodically \cite{Ber66, Rob71}. The first such set that he found had 20,426 tiles, initiating the hunt to find smaller aperiodic tile sets \cite{GruShe87}. The most famous, and arguably the most beautiful, such tile set is the pair of tiles discovered by Roger Penrose in the 1970s \cite{Pen79}. Isometric copies of these edge-decorated tiles (represented either as a pair of thick and thin rhomb, or as kite and dart tiles) can tile the plane but only aperiodically, and in fact form highly structured repetitive tilings with striking 10-fold rotational structure, similar to the rotational symmetry of the diffraction patterns of quasicrystals first observed by Shechtman.

Naturally, one wonders if two tiles are needed. The `Monotile Problem' asks: \emph{is there a single tile of the Euclidean plane for which copies of the tile can be used to tile the plane, but only non-periodically?} There are several ways to interpret this question. By `copies' of the tile one usually means isometric copies of the tile, through rotations, reflections and translations, although it is also of great interest to allow only rotations and translations \cite{GruShe87}. There are several demands one could make of such a `tile'. It is natural, for example, to ask that the tile does not have too wild a shape: it should be the closure of its interior, but one might also demand that it is a polytope, just a topological disc or perhaps merely that it has connected interior. And finally by `tiling the plane' one usually means that the tiles cover the plane but that distinct tiles overlap on at most their boundaries (however, we note here Gummelt's aperiodic tile that tiles the plane with overlaps \cite{Gum96}). One should also specify what rules are permitted in how tiles can be placed next to each other --- should these rules be forced by geometry alone, are colour matchings permitted, or can more complicated local rules be specified?

The best current solution to the monotile problem without overlapping tiles is the Socolar--Taylor tile \cite{SocTay11, SocTay12, Tay10}. This tile forces limit-periodic structures, closely related (but distinct) from the $(1+\epsilon+\epsilon^2)$-tilings \cite{PenTwstr,Pen95} and half-hex tilings \cite{BGG12}, using just a single tile. However, the matching rules restrict configurations not only of neighbouring tiles, but also non-touching next-nearest neighbours. An alternative form of the tile may be given which has nearest-neighbour locality, but at the cost that the tile has a complicated shape, with disconnected interior. So the hunt remains for a monotile with simple shape but also next-nearest neighbour matching rules.

\begin{figure}
	\def\svgwidth{\textwidth}
	\centering
	\includegraphics[scale = 0.3]{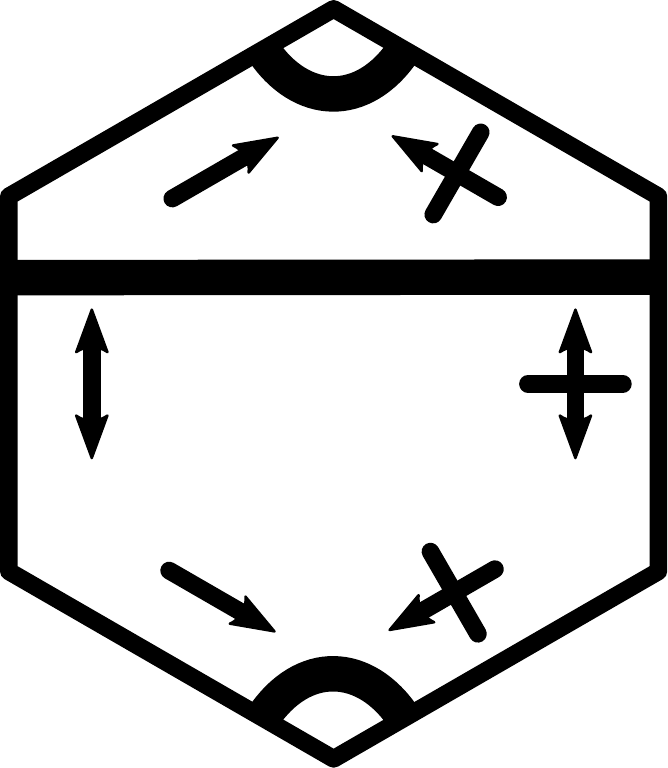}
	\caption{The tile. It consists of one horizontal straight \RI-segment and two \RI-turning segments, meeting the edges with the same offsets from the centre axes. The left-hand edges are labelled with negative \RII-charges, which are oriented from, top to bottom: clockwise, both clockwise and anticlockwise, and anticlockwise, respectively. The right-hand edges are labelled with positive \RII-charges, from top to bottom: anticlockwise, both clockwise and anticlockwise, and clockwise.}
	\label{fig: tile}
\end{figure}

In this paper we define a new aperiodic tile which, like the first form of the Socolar--Taylor tile, satisfies the requirement that it is a simple geometric shape, again being a hexagon. Unlike the Socolar--Taylor tile, the rules for which tiles are allowed to meet are nearest-neighbour, in fact only need to be checked on pairs of tiles meeting along an edge. The drawback is that whilst these rules are simply-stated and entirely local (in fact, edge-to-edge), they cannot be enforced by shape alone. Rather, whether two tiles can meet is determined by orientation as well as `charge' (equivalently decoration of one of two colours) along edges. Our tile is given in Figure \ref{fig: tile}. Two tiles $t_1$ and $t_2$ are permitted to meet along a shared edge $e$ only if:

\vbox{ 
\begin{itemize}
	\item[\RI]  the decorations of black lines of $t_1$ and $t_2$ are continuous across $e$;
	\item[\RII] whenever the two charges at $e$ in $t_1$ and $t_2$ both have a clockwise orientation, then they must be opposite in charge.
\end{itemize}
}

\begin{figure}
	\def\svgwidth{\textwidth}
	\centering
	\includegraphics[width = \textwidth]{patch.pdf}
	\caption{Patch of a valid tiling, where reflections of the tile of Figure \ref{fig: tile} are shaded in grey.}
	\label{fig: patch}
\end{figure}

Throughout, we shall call a tiling of the plane by isometric copies of the single tile of Figure \ref{fig: tile} \emph{valid} when tiles meet edge-to-edge and satisfy rules \RI\ and \RII\ at each edge. Our main theorem is that such tilings exist, and that they are always non-periodic:

\begin{thm}
\label{thm: main} There exist valid tilings by the tile of Figure \ref{fig: tile}. Moreover, any valid tiling $T$ is non-periodic; that is, if $T = T+x$ for $x \in \R^2$, then $x=0$.
\end{thm}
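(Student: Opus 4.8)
The plan is to show, using rules \RI\ and \RII\ together, that every valid tiling carries a canonical hierarchy of triangular regions at all scales — the organising idea behind the Socolar--Taylor tiling — and then to deduce non-periodicity from a short scaling argument. For the existence half I would build such a tiling by hand via an inflation rule: fix a patch forming an \emph{order-$0$ triangle}, and describe how an \emph{order-$(n+1)$ triangle} is assembled from three order-$n$ triangles together with the tiles filling the gaps between them; a nested sequence of such triangles, each placed inside the next, exhausts $\R^2$ and defines a tiling $T_\infty$. Rule \RI\ then holds for $T_\infty$ by construction, the black segments being made to join up across every edge, while rule \RII\ reduces, by self-similarity of the inflation, to checking a finite list of adjacency types (edges inside an order-$0$ triangle, edges between an order-$n$ triangle and its surrounding filler, and edges along the seams between blocks) and confirming that in each case two simultaneously clockwise charges carry opposite sign. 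This is a finite, if laborious, verification.

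For non-periodicity the structural step comes first, and is where I expect the real work to lie. Rules \RI\ and \RII\ should force the black decorations of a valid tiling $T$ to link up across edges into a locally finite family of curves, each either bi-infinite or the boundary of an equilateral triangle, cutting the plane into triangular regions; rule \RII\ is what obstructs the `wrong' local continuations of a partially completed triangle and thereby forces these triangles to nest into a hierarchy. Concretely, one wants to prove: the triangles of $T$ carry well-defined orders $0,1,2,\dots$ (in particular $T$ contains order-$0$ triangles); any two of them are either nested or have disjoint interiors; an order-$n$ triangle contains only triangles of smaller order and lies inside a unique triangle of each larger order; and the combinatorial pattern by which order-$(n-1)$ triangles and connecting tiles sit inside an order-$n$ triangle is completely determined. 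It follows that any two order-$n$ triangles are congruent with disjoint interiors, that the hierarchy has no maximal triangle and hence contains triangles of unbounded size, and — since the order-$n$ triangles are intrinsic features of the black-line web — that the whole hierarchy is canonically determined by $T$. I would expect the edge-to-edge nature of \RII\ to make this analysis cleaner than the corresponding next-nearest-neighbour bookkeeping for the Socolar--Taylor tile, which is the point of the construction.

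Granting the structural step, the scaling step is immediate, and is the `surprisingly simple' conclusion. Suppose $T = T + x$ for some $x \in \R^2$. Translation by $x$ preserves the black lines of $T$, hence carries the canonical hierarchy to itself and permutes the order-$n$ triangles of $T$ for every $n$. There is an absolute constant $c > 0$ (for equilateral triangles one may take $c$ to be the ratio of inradius to side length) such that any triangle of the fixed shape meets each of its translates by a vector of length less than $c$ times its diameter. Choose $n$ so large that $T$ has an order-$n$ triangle $\triangle$ of diameter greater than $|x|/c$; such a $\triangle$ exists since the hierarchy contains triangles of unbounded size. Then $\triangle$ and $\triangle + x$ are order-$n$ triangles of $T$ with overlapping interiors, so they are not interior-disjoint; being congruent triangles of the hierarchy they must therefore coincide, $\triangle = \triangle + x$. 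A bounded set has no nonzero period, so $x = 0$, and $T$ is non-periodic.
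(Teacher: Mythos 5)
Your proof has a genuine gap at its centre: the ``structural step'' on which everything rests is never argued, and as stated it is false for some valid tilings. You assert that \RI\ and \RII\ force every valid tiling to decompose into a complete nested hierarchy in which every triangle has a well-defined order and ``lies inside a unique triangle of each larger order,'' and you derive unboundedness of triangle sizes from the non-existence of a maximal triangle in that hierarchy. But the matching rules here are only edge-to-edge, and there are valid tilings with infinite fault lines and infinite $n$-cycles (classified in Section \ref{sec: hull}) in which some triangles are contained in no larger triangle at all, and others in which the plane is split by an infinite \RI-line; for these your hierarchy does not exist and your inradius/diameter scaling argument (which needs a \emph{finite} congruent triangle $\triangle$ with $\triangle$ and $\triangle + x$ interior-overlapping) does not apply. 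More importantly, even in the generic case you give no mechanism by which \RII\ forces large triangles: \RI\ alone already forces the hierarchical nesting \emph{inside} any triangle (Proposition \ref{prop: R1 structure}), but it happily admits periodic tilings in which all triangles have some bounded size. The entire content of aperiodicity is ruling that out, and the actual mechanism is a charge bookkeeping argument: following an \RI-edge to the next edge of an exterior triangle flips the charge (Lemma \ref{lem: no chains}), so a three-cycle of mutually exterior edges is impossible; if some edge led to a segment no longer than itself, the edges would spiral inwards into exactly such a three-cycle (Lemma \ref{lem: length}); hence edge lengths are unbounded (Corollary \ref{cor: unbounded lines}). Nothing in your proposal plays the role of this step, and the remaining case of infinite \RI-lines still needs its own (short) argument, via three pairwise non-parallel infinite lines or unbounded triangles along the fault.

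On existence your plan is close in spirit to the paper's standard patches $P_n$, but ``checking a finite list of adjacency types'' understates the real issue. The \RII-data is a choice of charge (equivalently, of tile versus reflected tile) for every hexagon, and this choice is not determined by the \RI-decoration of an order-$n$ block: the charge an \RI-edge must carry depends on whether it leads \emph{near} or \emph{far} into the next edge, so charges must be propagated globally along the graph of \RI-edges, with a sign flip at each far transition. Consistency of this assignment is not a finite local check; it holds because that edge graph has no cycles (each path component is a tree), which is what the paper verifies before reducing to the three local cases of Figure \ref{fig: consistency}. Your inflation would need to carry charge data and prove the analogous acyclicity, or equivalently that the inductively chosen charges never conflict.
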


As we shall show in Section \ref{sec: hull}, the \RI\ rule alone forces \RI-triangles to contain a particular hierarchical nesting of others, just as in the Socolar--Taylor tilings. The way that arbitrarily large such triangles are forced with the new rule \RII, however, is quite different. Indeed, firstly, the matching rules being edge-to-edge allows for tilings with `infinite fault lines', as well as some others defects, as discussed in Section \ref{sec: hull}. Secondly, the patterns of tile parities (given by labelling hexagons only with the information of whether the tile of Figure \ref{fig: tile} or its mirror image is used) are very different, and in fact closely follow the structure of the \RI-edges, which are forced to carry the same parities across them.

This observation leads to a remarkably simple proof of aperiodicity, presented in full detail in Section \ref{sec:aperiodicity}, on pages \pageref{sec:aperiodicity}--\pageref{sec: existence}, but which we briefly outline now. It is easily seen that following one \RI-edge forwards to a second one, belonging to an exterior triangle, flips charges of the \RI-edges (Lemma \ref{lem: no chains}). This implies that the second \RI-edge is longer (Lemma \ref{lem: length}); the alternative would lead to a spiral of edges ending in a period three cycle (Figure \ref{fig: spiral}) resulting in a parity mismatch. Hence there is no upper limit on the lengths of the edges of \RI-triangles (Corollary \ref{cor: unbounded lines}), from which non-periodicity quickly follows.

Whilst the central objective of the paper is to showcase a tile admitting a novel and elementary proof of aperiodicity, in the second half of the paper we proceed to further analyse the collection of valid tilings and its dynamics. Analogously to the Socolar--Taylor tilings, there are a particular pair of `defect' tilings. The underlying \RI-decorations can be completed with charges to make a valid tiling in $8$ different ways, of which the two with $3$-fold rotational symmetry are the non-repetitive defects, each containing one instance of a particular vertex configuration. For the Socolar--Taylor tile, these defects are forced by the existence of such a vertex configuration, which does not appear in any other valid tiling. For our tile, this vertex in fact forces either this defect tiling, or alternatively what we call an `$n$-cycle'. There also exist tilings with `fault-lines', whose existence is linked to the fact that our matching rules are entirely edge-to-edge: two half-planes of valid tilings either side of a bi-infinite straight \RI-edge may, up to a charge-flip of one side, be pasted together along the \RI-edge. We give a classification result listing the possible \RI-decorations of valid tilings in Theorem \ref{thm: classification} which, like the proof of aperiodicity, is proven very directly from the simple structures of the \RI-edges, in particular the \RI-edge graph, introduced in Section \ref{sec: edge graphs}. There is a simple method of adding \RII\ charge decorations to the \RI-edge graph to make a valid tiling.

In Section \ref{sec: substitution} we introduce a substitution rule which generates valid tilings. Using the classification result above, one sees that defects can only ever appear sparsely and that all valid tilings have some form of supertile decomposition (Proposition \ref{prop: valid => supertile decomposition}). This allows us to deduce the existence of uniform patch frequencies, or equivalently unique ergodicity of the associated tiling dynamical system. With respect to the unique invariant measure, almost all tilings belong to the minimal core of substitution tilings. This minimal hull is a $2$-fold cover of another hull of tilings, also generated by substitution, given by identifying tilings which are charge-flips of each other. We are able to identify the multiplicity of the fibres of the map to the maximal equicontinuous factor (MEF), which demonstrates that the system is quite distinct from the Socolar-Taylor and Penrose $(1+\epsilon+\epsilon^2)$-tilings. The substitutional hull (modulo charge-flip) factors almost everywhere one-to-one to its MEF and so has pure point dynamical spectrum and the structure of a regular model set.

\begin{Acknowledgements}
We thank the anonymous referee for their helpful comments. We are grateful to Franz G\"{a}hler for discussions which helped to uncover the substitution generating a full measure subset of tilings in our hull.
\end{Acknowledgements}

\section{Aperiodicity}\label{sec:aperiodicity}

In this section we show that all valid tilings by the tile of Figure \ref{fig: tile} are non-periodic. We begin by defining the \RI-triangles and how one associates charges to their edges.

\subsection{\RI-Triangles}
The straight \RI-segments of the tiles are offset from the centre axis of the tile, so we may assign them a direction. We choose for them to point to the right when the \RI-segment is horizontal and offset towards the top of the tile, that is, when positioned as in Figure \ref{fig: tile}. The \emph{turns} in the \RI-lines (the small sections of decorations about two corners of the tile) are correspondingly offset, which means that they always turn leftwards from the direction of a straight \RI-segment leading into it. A maximal straight section of an \RI-line will be called an \emph{\RI-edge}. Since turns are always to the left, the \RI-edges always form either infinite lines (possibly composed of two edges, broken by a single turn) or triangles of three edges of the same length. With our convention of directing edges, triangles are always directed anticlockwise.

One may show more, namely that triangle edges must consist of $2^n - 1$ straight sections, where $n \in \N$, and that there is a hierarchical and identical formation of \RI-triangles inside every \RI-triangle of the same size. These observations won't be necessary for our proof, although this shall be proved in Section \ref{sec: hull} when we investigate the set of all possible valid tilings.

\subsection{Charges of triangle edges}
The region to the immediate left of a directed \RI-edge (even if it is infinite) is considered as the `inside' of the corresponding (possibly infinite) triangle. On a tile carrying an \RI-edge, precisely one clockwise oriented charge lies on the inside of the triangle, either positive or negative. We assign this charge also to the straight \RI-segment. So, for example, translates and rotates of the tile of Figure \ref{fig: tile} carry a negative charge and its reflection carries a positive charge. It is easy to see that two consecutive straight \RI-segments must be assigned the same charge, so we may consistently assign a charge $\ch(E) \in \{+,-\}$ to an entire \RI-edge $E$. Given a charge $c$ we let $c^*$ be its opposite, that is, $+^* = -$ and $-^* = +$.

Take an \RI-triangle edge $E_1$ that, following its orientation forwards, ends at a turn. The tile containing the turn carries a different \RI-edge $E_2$. In this case we say that \emph{$E_1$ leads to $E_2$} and write $E_1 \meets E_2$. We further specify that $E_1 \meets^N E_2$ if $E_2$ is offset \emph{near} to $E_1$ and that $E_1 \meets^F E_2$ if $E_2$ is offset \emph{far} from $E_1$. Equivalently, we have that $E_1 \meets^N E_2$ (resp.\ $E_1 \meets^F E_2$) if the triangle with edge $E_1$ is contained in (resp.\ is not contained in) the triangle with edge $E_2$; see Figure \ref{fig: charge transfer}.

\begin{figure}
	\def\svgscale{0.16}
	\centering
	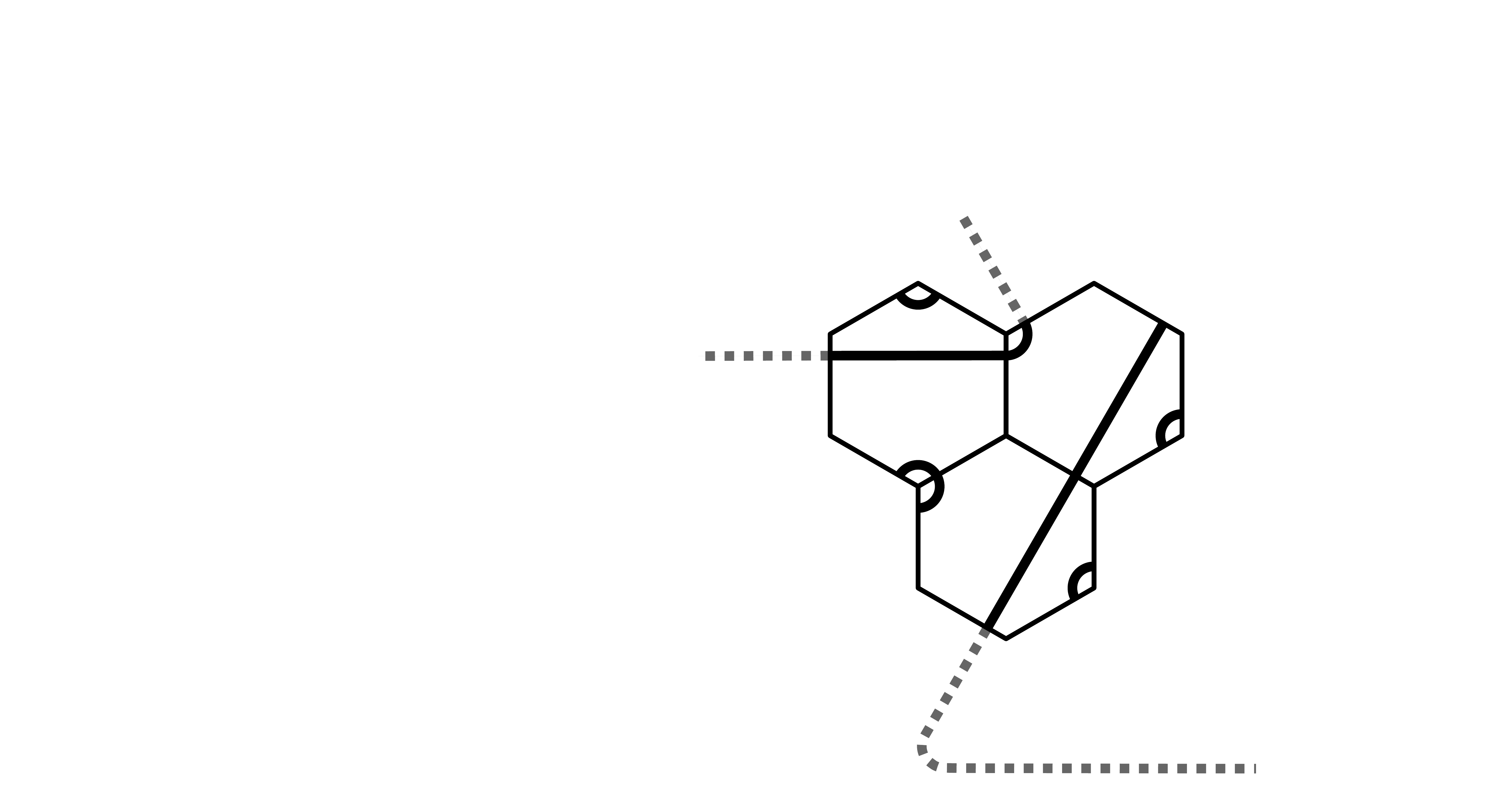
	\caption{The definition of $E_1 \meets^N E_2$ (left) and $E_1 \meets^F E_2$ (right). Relevant charges and tiles $t_1$ and $t_2$ are indicated as used in the proof of Lemma \ref{lem: no chains}.}
	\label{fig: charge transfer}
\end{figure}

\begin{lemma} \label{lem: no chains}
In any valid tiling, if $E_1 \meets^N E_2$ then $\ch(E_1) = \ch(E_2)$ and if $E_1 \meets^F E_2$ then $\ch(E_1) = \ch(E_2)^*$. In particular, there is no chain $E_1 \meets^F E_2 \meets^F E_3 \meets^F E_1$ of three edges.
\end{lemma}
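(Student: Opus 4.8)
The final assertion is a formal consequence of the first two: were there a chain $E_1 \meets^F E_2 \meets^F E_3 \meets^F E_1$, then applying the charge-transfer statement three times would give $\ch(E_1) = \ch(E_2)^* = \ch(E_3) = \ch(E_1)^*$, which is impossible since $c \ne c^*$ for every charge $c$. So the plan is to concentrate on the two charge-transfer claims, which I would establish by a direct inspection of the configuration of tiles around the turn at which $E_1$ leads to $E_2$, as depicted in Figure \ref{fig: charge transfer}.

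Write $t_2$ for the tile containing this turn, so that $t_2$ carries a straight \RI-segment of $E_2$ and hence has charge $\ch(E_2)$; and write $t_1$ for the tile on the other side of the edge $e$ through which the \RI-line of $E_1$ passes into $t_2$, so that $t_1$ carries the final straight \RI-segment of $E_1$ and hence has charge $\ch(E_1)$. The position of $e$ relative to $t_1$ is forced, being the edge at which the directed \RI-segment of $t_1$ leaves $t_1$. The position of $e$ relative to $t_2$ is then pinned down by the requirement that the turning segment of $t_2$ run from $e$ to the edge at which the straight \RI-segment of $t_2$ begins; together with the chirality of $t_1$ and with whether the configuration is of near or far type, this determines which edge of the tile of Figure \ref{fig: tile} (up to reflection) the edge $e$ is in each of $t_1$ and $t_2$, and hence which \RII-charge, with which orientation, each of $t_1$ and $t_2$ carries at $e$. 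In every case both of these charges carry a clockwise orientation --- here it is crucial that the \RI-segments enter and leave a tile through precisely the two edges marked with \emph{both} a clockwise and an anticlockwise charge --- so rule \RII applies at $e$ and forces the two charges to be opposite in sign. Comparing signs then shows that $t_1$ and $t_2$ must have the same chirality in the near case and opposite chirality in the far case, that is, $\ch(E_1) = \ch(E_2)$ when $E_1 \meets^N E_2$ and $\ch(E_1) = \ch(E_2)^*$ when $E_1 \meets^F E_2$.

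The real content, and the step I expect to require the most care, is the geometric bookkeeping just sketched: reading off correctly from the edge decoration of the tile --- for each chirality of $t_1$ and for each of the near and far offsets --- exactly which \RII-charge (its sign together with its clockwise/anticlockwise orientation) sits on each side of $e$, and then checking that rule \RII is not merely consistent with, but actually forces, the stated sign relation rather than being vacuous. This is a finite check, best run through with the pictures of Figure \ref{fig: charge transfer} in hand and organised by fixing $t_2$ and the turn first and then going through the possibilities for the chirality of $t_1$ in each of the near and far cases; the ``both clockwise and anticlockwise'' markings on the two middle edges are present precisely so that the relevant \RII-constraint is always active.
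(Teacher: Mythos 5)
Your proposal is correct and follows essentially the same route as the paper: it identifies the tiles $t_1$ and $t_2$ and their shared edge $e$, observes that both \RII-charges at $e$ are clockwise oriented so that rule \RII\ forces them to be opposite, reads off the near/far sign relation by inspection of Figure \ref{fig: charge transfer}, and deduces the impossibility of a three-cycle formally from the two transfer rules. The only difference is presentational: the paper phrases the far case via the charge on the edge of $t_2$ opposite $e$, whereas you phrase it via the chirality of $t_2$, but both amount to the same finite check.
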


\begin{proof} The proof follows from a simple inspection of Figure \ref{fig: charge transfer}. Indeed, suppose that $E_1 \meets^N E_2$. Let $t_1$ be the tile containing the final straight \RI-segment of $E_1$ before the turn and $t_2$ the tile containing the turn as well as a straight section of $E_2$. Let $e$ be the edge shared by the tiles $t_1$ and $t_2$. The charge on $e$ in $t_1$ is also clockwise oriented and equal to $c^*$, and the charge of $e$ in $t_2$ is clockwise oriented, and thus equal to $(c^*)^* = c$. By definition, this charge is equal to $\ch(E_2)$, as required. The case for $E_1 \meets^F E_2$ is analogous; in this case $\ch(E_2)$ is given by the charge of the edge opposite $e$ in $t_2$, which is $c^*$.

Given $E_1 \meets^F E_2 \meets^F E_3 \meets^F E_1$, by the above $\ch(E_1) = \ch(E_1)^{***} = \ch(E_1)^*$, a contradiction, so there is no such chain of three edges in a valid tiling. \end{proof}

\subsection{Finding edges of increasing length}

We let $L(E) \in \N \cup \{\infty\}$ be the length of an \RI-edge $E$, the number of tiles containing the straight segments of $E$ (so not including the turning tiles).

\begin{lemma} \label{lem: length}
Consider \RI-edges $E_1 \meets E_2$ in a valid tiling. Let $t$ denote the tile containing the terminating turn of $E_1$ and thus also a tile of $E_2$. Consider the collection $R$ of all tiles containing straight \RI-segments of $E_2$ starting from and including $t$ and heading to the right from $E_1$. Then $\# R = \infty$ if $L(E_1) = \infty$, and $\# R > L(E_1)$ otherwise.
\end{lemma}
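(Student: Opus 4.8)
The plan is to argue by contradiction, converting a short rightward run of $E_2$ into an impossible ``spiral'' of \RI-edges that winds around a point and is forced to close into a forbidden three-cycle. First I would dispose of the case $\ell(E_1) = \infty$: by the trichotomy for \RI-lines (a maximal \RI-line is straight, carries a single turn, or closes up into a triangle), an infinite edge $E_1$ lies on an infinite line with exactly one turn, and a quick inspection of the local picture at that turn shows the \RI-edge $E_2$ carried by the turning tile $t$ runs off to infinity in the forward direction, so $\# R = \infty$ (or, if $\meets$ is simply never realised with $\ell(E_1)=\infty$, this case is vacuous, which the definition of $\meets$ would reveal).

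So suppose $\ell(E_1) = n < \infty$ and, for contradiction, $\# R \le n$. Then following $E_2$ rightwards out of the corner tile $t$ we meet a turn after at most $n$ straight tiles, so $E_2 \meets E_3$ for the \RI-edge $E_3$ carried by that turning tile. Inspecting Figure \ref{fig: charge transfer} together with the way the triangle $\Delta_1$ of $E_1$ is seated, I would first argue that $\# R \le n$ forces the $\meets^F$ alternative: in the case $E_1 \meets^N E_2$, the edge $E_2$ belongs to a triangle strictly containing $\Delta_1$, and $t$ sits on $E_2$ at a position with more than $n$ of its tiles ahead, giving $\# R > n$ directly. Hence $E_1 \meets^F E_2$, so $\Delta_1 \not\subseteq \Delta_2$; and from the same local picture, $\Delta_2$ is then no larger than $\Delta_1$ and is seated at the corner of $\Delta_1$ so that the short-run hypothesis passes to $E_2 \meets E_3$, which is again $\meets^F$.

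Iterating produces a sequence $E_1 \meets^F E_2 \meets^F E_3 \meets^F \cdots$ in which each step is a left turn through the fixed triangle-corner angle and the associated triangles do not increase in size; since those sizes lie in the discrete set $\{2^m - 1 : m \in \N\}$, the spiral must stabilise and close up into a three-cycle $F \meets^F G \meets^F H \meets^F F$ --- this is the configuration of Figure \ref{fig: spiral}, three edges winding around a common corner point. But Lemma \ref{lem: no chains} forbids exactly this: it would force $\ch(F) = \ch(F)^{***} = \ch(F)^{*}$, a parity mismatch. This contradiction yields $\# R > \ell(E_1)$, as required.

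I expect the main obstacle to be the middle step: extracting from the tile's decorations and the hierarchical layout of \RI-triangles the precise claims that (i) a short rightward run forces $\meets^F$ rather than $\meets^N$, and (ii) this short-run hypothesis is genuinely inherited at each turn, so the spiral persists and must close after finitely many steps into an all-$\meets^F$ cycle. The remaining ingredients --- the infinite case, the inequality $\ell(E_2) \ge \# R$, and the concluding appeal to Lemma \ref{lem: no chains} --- are routine.
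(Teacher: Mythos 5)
Your overall strategy --- assume the run $R$ is too short, build a spiral of edges $E_1 \meets^F E_2 \meets^F E_3 \meets^F \cdots$, and derive a forbidden three-cycle via Lemma \ref{lem: no chains} --- is exactly the paper's, and your step (i) (a short forward run rules out $\meets^N$, since $\Delta_1 \subseteq \Delta_2$ forces $E_2$ to extend more than $\ell(E_1)$ tiles forward of $t$) matches the paper's reasoning. But the engine you use to close the spiral does not work. You claim the triangles $\Delta_i$ do not increase in size and let discreteness of the sizes force stabilisation. The hypothesis only controls the \emph{forward} run of $E_{i+1}$ out of the turning tile of $E_i$; in the $\meets^F$ configuration $\Delta_{i+1}$ lies on the opposite side of the line through $E_{i+1}$ from $\Delta_i$, so $E_{i+1}$ may extend arbitrarily far \emph{backward} from that tile and $\Delta_{i+1}$ may be much larger than $\Delta_i$. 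Triangle size is therefore not a monotone quantity here. The correct monovariant, used in the paper, is $\#R_i$, the number of tiles of $E_i$ from the terminating turn of $E_{i-1}$ to the terminating turn of $E_i$; these are non-increasing because a longer run would force $E_{i+1}$ to intersect $E_1$.

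Moreover, even with a correct non-increasing discrete quantity, ``stabilises'' does not yield ``closes up into a three-cycle'': you must show $E_{i+3} = E_i$, not merely that the quantity repeats. The paper gets this from a confinement argument: $E_{i+3}$ is parallel to $E_i$ and $R_{i+3}$ must lie in the triangular region bounded by $\Delta_i$, $\Delta_{i+1}$ and $\Delta_{i+2}$, each of whose rows parallel to $E_i$ has strictly fewer than $\#R_i$ tiles except the row of $E_i$ itself; hence $\#R_{i+3} \leq \#R_i$ with equality only if $E_{i+3} = E_i$, and since $\#R_i$ cannot decrease forever the forbidden cycle is forced. Two further points. Your disposal of the case $\ell(E_1) = \infty$ fails: $E_2$ is the straight segment of the turning tile $t$, a different \RI-line from the continuation of $E_1$'s own line past its turn, and no local inspection shows it is infinite --- that is precisely the assertion of the lemma in this case (which is not vacuous: it is invoked in the proof of Theorem \ref{thm:nonperiodic}); the paper handles it by running the same spiral argument under the assumption that $\#R$ is finite. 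And the appeal to edge lengths lying in $\{2^m - 1\}$ imports Proposition \ref{prop: R1 structure}, which the paper proves later and deliberately avoids in the aperiodicity argument; it is not circular, but it is not needed and does not repair the gap.
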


\begin{figure}
	\def\svgscale{0.05}
	\centering
	\import{}{spiral.pdf_tex}
	\caption{Creating a spiral of edges from an edge $E_1$ (red tiles) containing more than or equal to the number of tiles of $R$ (blue tiles) in $E_2$, as in the proof of Lemma \ref{lem: length}.}
	\label{fig: spiral}
\end{figure}

\begin{proof}
Suppose, to the contrary, that $\#R < L(E_1) = \infty$ or that $\# R \leq L(E_1) < \infty$. We define a sequence $E_1$, $E_2$, $E_3$, \ldots of edges of respective triangles $\Delta_1$, $\Delta_2$, $\Delta_3$, \ldots, which we will show must spiral inwards and eventually form 3-periodic chain contradicting the previous lemma: see Figure \ref{fig: spiral}. The edges $E_1$ and $E_2$ are already as given and, having constructed $E_i$, we define $E_{i+1}$ by following $E_i$ rightwards from $E_{i-1}$ to its terminating turn, which is the tile containing $E_{i+1}$. Let $R_i$ be the collection of tiles containing straight \RI-segments of each $E_i$, starting from the tile with terminating turn of $E_{i-1}$ up to the terminating turn of $E_i$.

We observe that $E_1 \meets^F E_2$. Indeed, otherwise, the edge $E_3$ following $E_2$ would be part of the same \RI-triangle, in particular it would have length the same as that of $E_2$, causing it to intersect $E_1$, see the red dotted line in Figure \ref{fig: spiral}. Hence $E_2$ has orientation making $E_2 \meets E_3$. To prevent $E_3$ intersecting $E_1$, it is necessary that $\#R_3 \leq \#R_2 \leq L(E_2)$. So $E_2 \meets E_3$ and $\#R_3 \leq L(E_2) < \infty$. These properties of $E_2$ and $E_3$ are analogous to our initial assumptions on $E_1$ and $E_2$, so the argument repeats, showing that $E_i \meets^F E_{i+1}$ for all $i \in \N$ and $\#R_i$ is monotonically decreasing in $i$.

For $i \in \N$, the edge $E_{i+3}$ is parallel to $E_i$. All triangles $\Delta_j$ are in the exteriors of each other by the above, so we see that $\# R_i = \# R_{i+3}$ is only possible if $E_i = E_{i+3}$. Indeed, $R_{i+3}$ must belong to the triangular region bounded between $\Delta_i$, $\Delta_{i+1}$ and $\Delta_{i+2}$, which has strictly less than $\# R_i$ tiles in each row parallel to $E_i$, except for the row containing $E_i$ itself. Since $\# R_i$ can not strictly decrease indefinitely, we must have that $E_i = E_{i+3}$ for sufficiently large $i$. But this contradicts Lemma \ref{lem: no chains} since we have found a chain $E_i \meets^F E_{i+1} \meets^F E_{i+2} \meets^F E_i$.
\end{proof}

\begin{cor} \label{cor: unbounded lines}
In any valid tiling there is no finite upper bound on the length of \RI-edges.
\end{cor}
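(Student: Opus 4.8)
The plan is to bootstrap Corollary \ref{cor: unbounded lines} from Lemma \ref{lem: length} by an easy induction. Suppose, for contradiction, that there is a finite bound $N$ with $\ell(E) \leq N$ for every \RI-edge $E$ in some valid tiling $T$. I would first observe that $T$ must contain at least one \RI-edge which terminates in a turn (i.e. one participating in a relation $E_1 \meets E_2$): since the \RI-lines are either infinite lines or triangles, and infinite lines are ruled out by the bound $N < \infty$, the tiling contains an \RI-triangle, and every edge of a triangle ends in a turn.

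Starting from such an edge $E_1$ with $E_1 \meets E_2$, Lemma \ref{lem: length} produces the collection $R$ of tiles in $E_2$ lying to the right of $E_1$, with $\#R > \ell(E_1) \geq 1$ (the finite case, since $\ell(E_1) \leq N < \infty$). The key point is that $E_2$ itself must therefore be a triangle edge rather than an infinite line: if $E_2$ were infinite then some \RI-edge would have infinite length, contradicting the bound. Hence $\ell(E_2) \geq \#R > \ell(E_1)$, and $E_2$ again terminates in a turn, so $E_2 \meets E_3$. Iterating, I obtain a sequence of \RI-edges $E_1, E_2, E_3, \ldots$ with strictly increasing lengths $\ell(E_1) < \ell(E_2) < \ell(E_3) < \cdots$, all finite since none can be an infinite line. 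This forces $\ell(E_k) \geq \ell(E_1) + (k-1) \geq k$, so for $k > N$ we get $\ell(E_k) > N$, contradicting the assumed bound.

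The only subtlety worth spelling out is the passage from "$\#R > \ell(E_1)$" to "$E_2$ has finite length bounded below by $\#R$": the tiles in $R$ are straight \RI-segments of $E_2$, so $\ell(E_2) \geq \#R$ provided $E_2$ is a single edge; but $E_2$ is by definition a maximal straight section, hence a single \RI-edge, and the tiles of $R$ are genuinely among its straight segments, so $\ell(E_2) \geq \#R > \ell(E_1)$ indeed. I do not expect any real obstacle here — the content is entirely in Lemma \ref{lem: length}, and this corollary is just the observation that its conclusion, combined with the dichotomy "infinite line or triangle" for \RI-lines, cannot be reconciled with a uniform finite bound. If one prefers to avoid even invoking the dichotomy, one can phrase it as: either some \RI-line is infinite (done), or every \RI-line has finite length and we run the strictly increasing sequence above to exceed any proposed bound.
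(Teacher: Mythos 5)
Your proof is correct and follows essentially the same route as the paper: both derive the corollary directly from Lemma \ref{lem: length}, the only cosmetic difference being that the paper applies the lemma once to a triangle of maximal size while you iterate it to build a strictly increasing sequence of edge lengths. No gaps; the care you take over $\ell(E_2)\geq \#R$ and the existence of at least one \RI-triangle matches what the paper leaves implicit.
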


\begin{proof}
Supposing otherwise we may find a finite triangle of largest size, say with edge $E$. Then $E \meets E'$ for some edge $E'$, but the previous lemma implies that $L(E) < L(E')$, a contradiction. So there is either an infinite \RI-line or all triangles are finite but of unbounded size, as required.
\end{proof}

\begin{figure}
	\def\svgscale{0.1}
	\centering
	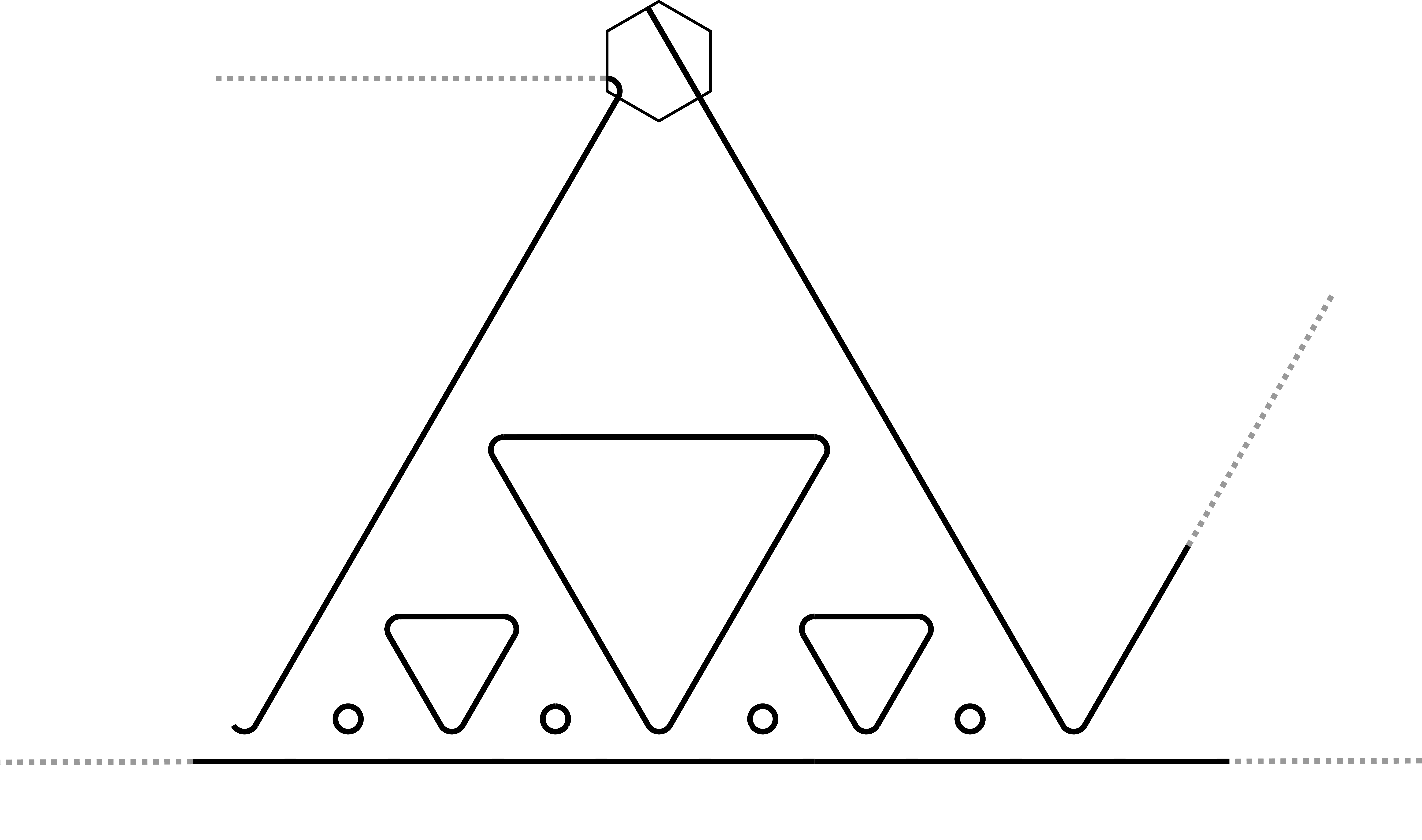
	\caption{Proof of aperiodicity in case of existence of infinite \RI-line $L$.}
	\label{fig: row of triangles}
\end{figure}

\begin{thm}\label{thm:nonperiodic}
Any valid tiling $T$ is non-periodic.
\end{thm}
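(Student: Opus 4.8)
The plan is to argue by contradiction: suppose $T = T + x$ with $x \neq 0$, so that $T = T + nx$ for every $n \in \Z$. Since the \RI-decorations, and hence the \RI-lines and \RI-triangles, are determined by the tiling, translation by $x$ carries \RI-triangles of $T$ to \RI-triangles of $T$. The first thing I would record is a purely geometric fact: two distinct \RI-lines of a valid tiling are disjoint as subsets of $\R^2$. This is because on a single tile the \RI-decorations consist of one straight segment and two turning arcs, which are pairwise disjoint, and these join up continuously across shared edges; so no point can lie on two different \RI-lines. Consequently the boundaries of two distinct \RI-triangles are disjoint, so if two distinct \RI-triangles meet in an interior point then one is contained in the open interior of the other. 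As two translates of a triangle have equal area, it follows that two \RI-triangles of $T$ that meet in an interior point and are translates of one another must in fact coincide.

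Now I would invoke Corollary~\ref{cor: unbounded lines} and split into its two possibilities. Suppose first that every \RI-triangle is finite but their sizes are unbounded. Since the tile has a fixed diameter, an \RI-triangle with an edge of length $\ell$ has inradius bounded below by a constant times $\ell$, so I may fix an \RI-triangle $\Delta$ whose inradius $r$ exceeds $|x|$. Then $\Delta + x$ is again an \RI-triangle of $T$ and is a translate of $\Delta$; the incentre $p$ of $\Delta$ lies in the open ball of radius $r$ about the incentre $p+x$ of $\Delta + x$, hence in the interior of $\Delta + x$, and of course also in the interior of $\Delta$. By the previous paragraph $\Delta = \Delta + x$, forcing $x = 0$, a contradiction.

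It remains to treat the case where $T$ contains an infinite \RI-line $L$. After a rotation I may assume the straight ray of $L$ running off to $+\infty$ in one direction is horizontal and rightward-directed, with inside the upper half-plane $H$. My aim is to reduce this to the previous case by showing that $H$ contains \RI-triangles of unbounded size — only one sufficiently large triangle, located anywhere, is needed. Every tile carrying a segment of $L$ also carries two turning arcs, which are the terminating turns of \RI-edges feeding into $L$, and each such edge, not being infinite, bounds a finite \RI-triangle lying inside $H$; this produces a first row of \RI-triangles running along $L$, as pictured in Figure~\ref{fig: row of triangles}. Their inner edges then feed forward, and whenever an edge $E$ in one row leads to an edge $E'$ of the next we have $E \meets E'$, so Lemma~\ref{lem: length} places $E'$ in a strictly longer triangle than $E$; iterating, the rows produce \RI-triangles of strictly increasing, hence unbounded, size inside $H$, and one concludes as before.

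The main obstacle is exactly this last case: deducing the presence of arbitrarily large \RI-triangles from the bare existence of an infinite \RI-line, using only the local rules \RI\ and \RII\ and without appealing to the hierarchical description developed in Section~\ref{sec: hull}. One must rule out pathological configurations on the inside of $L$ — for instance a whole family of parallel infinite \RI-lines packed at bounded spacing with only bounded triangles between them, which would otherwise look like a candidate periodic tiling — and set up the rows and their feeding edges carefully enough that Lemma~\ref{lem: length} (with Lemma~\ref{lem: no chains} controlling the charges, and the non-spiralling mechanism behind Lemma~\ref{lem: length}) genuinely forces the triangle sizes to grow without bound. It may well be cleanest to split according to whether $x$ is parallel to $L$: if $x$ is not parallel to $L$, then $L$ and $L+x$ are disjoint parallel \RI-lines bounding a strip of finite width, which at once bounds the lengths of all non-parallel \RI-edges trapped inside it and severely restricts the structure there. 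By contrast, once the disjointness-of-triangles fact is in hand, the first case and the final step of the second case are essentially immediate.
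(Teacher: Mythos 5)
Your first case is sound, and in fact your disjointness-of-\RI-lines observation combined with the inradius/area argument is a welcome sharpening of the paper's one-sentence treatment of the ``arbitrarily large triangles'' alternative. The genuine gap is in the infinite-line case, and it is essentially the one you flag yourself without closing. The chain argument (``each edge feeds forward to a strictly longer one, so iterate'') only yields arbitrarily large \emph{finite} triangles if every edge in the chain bounds a finite triangle: Lemma \ref{lem: length} is perfectly consistent with some edge in the chain being infinite, in which case that edge has no terminating turn and the chain simply stops. Likewise, your assertion that each edge feeding into $L$ is ``not being infinite'' is an assumption rather than a deduction, and your normalisation of the inside of $L$ as a half-plane silently assumes $L$ has no turn. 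So the reduction ``infinite \RI-line $\Rightarrow$ arbitrarily large finite triangles somewhere'' is not established, and the configurations you worry about (several infinite \RI-lines with only bounded triangles between them) are not actually excluded by anything you write.

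The paper closes exactly this hole by arguing directly about the set $A$ of triangles sharing a turning tile with $L$ (Figure \ref{fig: row of triangles}): if a largest \emph{finite} member $\Delta \in A$ existed, the edge $E$ of $\Delta$ heading up and away from $L$ leads to an edge $E'$ whose triangle, by the quantitative form of Lemma \ref{lem: length} (the count $\# R > \ell(E)$ of tiles of $E'$ heading back towards $L$), must itself reach $L$, hence lies in $A$ and is strictly larger --- a contradiction. This leaves only the alternatives that $A$ contains finite triangles of unbounded size (where your first case applies) or that some member of $A$ is infinite; and, separately, the case that $L$ itself has a turn, where Lemma \ref{lem: length} produces a third infinite edge. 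In those remaining situations one has two or three pairwise non-parallel infinite \RI-edges, and non-periodicity is deduced from the geometry of those, not from large finite triangles. You would need to supply arguments of this kind --- or genuinely rule out infinite members of $A$ --- before your final case is complete.
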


\begin{proof}
By Corollary \ref{cor: unbounded lines} a valid tiling $T$ either contains an infinite \RI-line or triangles of arbitrarily large size. In the latter case, $T$ is non-periodic since any given translation will not be able to transfer sufficiently large triangles to others.

So we just need to show that any tiling $T$ with an infinite \RI-line is non-periodic. Assume that $T$ contains an infinite line $L$ with no turn. Orient the tiling so that $L$ points to the right and consider the set $A = \{\Delta_i \mid i \in \Z\}$ of triangles $\Delta_i$ that share turning tiles with $L$; see Figure \ref{fig: row of triangles}. Suppose that there is some $\Delta \in A$ of largest size. Supposing it is finite, follow its edge $E$ which heads upwards and to the right from $L$, leading to an edge $E'$ belonging to triangle $\Delta'$. The section of tiles from $t$ along $E'$ towards $L$ has more tiles than $L(E)$ by Lemma \ref{lem: length}, so in fact $\Delta'$ must reach a tile of $L$ and hence $\Delta' \in A$ too. But evidently $\Delta'$ is larger than $\Delta$, contradicting our assumption that $\Delta$ was the largest. So either $\Delta$ is infinite in size or there is no largest size of triangle in $A$. In either case $T$ cannot be periodic.

Suppose instead that $L$ has a turn. Let $E_1$ be the infinite length edge of $L$ directed towards the turn and $E_2$ the edge after the turn. We have $E_1 \meets E_3$ for another edge $E_3$, which is also infinite in length by Lemma \ref{lem: length}. We thus have three infinite \RI-edges and it is easy to see that any non-trivial translation would cause one to intersect another, non-parallel one, so $T$ is non-periodic.
\end{proof}

\section{Existence} \label{sec: existence}

\subsection{Standard \RI-tilings}

\begin{figure}
	\def\svgwidth{\textwidth}
	\centering
	\import{}{standard_patch.pdf_tex}
	\caption{Construction of standard patch $P_n$, here $P_3$. Starting with a triangle of size $2^n$, triangles of size $2^i$ are added for decreasing $i$ until ones of size $0$ are placed, defining $P_n$.}
	\label{fig: standard patch}
\end{figure}

We now prove that tilings satisfying \RI\ and \RII\ exist. We begin by giving a class of tilings with `standard' \RI\ decorations of hierarchically positioned \RI-triangles, which we later show can also be equipped with \RII\ decorations making valid tilings.

\begin{definition}
The \emph{size} $s(\Delta)$ of an \RI-triangle is defined to be $L(E) + 1$, where $E$ is an \RI-edge of $\Delta$. A loop of three \RI-turns is also considered to be an \RI-triangle, with size $1$.
\end{definition}

\begin{definition}
For each $n \in \N_0$, we define a \emph{standard patch} $P_n$ of hexagonal tiles, with (partial) \RI-decorations. We begin with an \RI-triangle $\Delta$ of size $2^n$ (see the left-hand patch of Figure \ref{fig: standard patch}). Another triangle $\Delta'$ of size $2^{n-1}$ is placed inside of $\Delta$ in the only way possible, that is, with edges leading to the centres of edges of $\Delta$ (see second patch of Figure \ref{fig: standard patch}). This leaves four triangular regions bounded between the edges of $\Delta$ and $\Delta'$. We repeat the above by placing four triangles of size $2^{n-2}$, one in each region with edges meeting the edges bounding the region (third patch of Figure \ref{fig: standard patch}). We continue until triangles of size $1$ are placed (fourth patch of Figure \ref{fig: standard patch}). The tiles carrying the \RI-triangle $\Delta$ and its interior, with \RI-decorations as constructed above, defines the patch $P_n$. 
\end{definition}

\begin{remark}
The above uniquely defines $P_n$, up to translation and rotation. Notice that all interior tiles of the patch $P_n$ are given full \RI-decorations, but the tiles meeting the boundary are only partially decorated.
\end{remark}

Although not needed for the arguments of this section, this hierarchical pattern of triangles is forced by the rule \RI, as we shall see in Proposition \ref{prop: R1 structure}. We now consider a natural collection of tilings associated to these standard patches:

\begin{definition} \label{def: standard tilings}
An \emph{\RI-tiling} is a tiling of hexagons decorated with \RI-decorations (but not \RII-decorations) which satisfies \RI. Such a tiling is called a \emph{standard \RI-tiling} if every finite patch is contained in a translate or rotate of a standard patch $P_n$. The collection of all standard \RI-tilings is denoted by $\Oa$.
\end{definition}

In the above definition one could allow $P_n$ to be extended to full \RI-decorations on boundary tiles (so that the given subpatch may share boundary tiles). However, this does not affect which \RI-tilings are standard. Indeed, a copy of $P_n$ is embedded in $P_m$ for any $m \geq n$, and in the interior of $P_m$ if $m \geq n+2$. Such embeddings also make it clear that standard \RI-tiilings exist, by choosing a nested union of such patches covering the entire plane.

\begin{remarks} \leavevmode
\begin{enumerate}
	\item We denote the collection of standard \RI-tilings by $\Oa$ since these tilings are easily seen to be mutually local derivable (MLD, see \cite{AObook}) to the {\bf \emph{arrowed}} \emph{hex tilings} \cite{BGG12}. Alternatively, these tilings are precisely those whose \RI-decorations come from the Socolar--Taylor tilings \cite{SocTay11}.
	\item These tilings can be constructed by substitution rules, either from the arrowed half hex substitution, via pseudo inflations (see \cite{BGG12}) or from an associated stone inflation on triangle tilings, as described in Section \ref{sec: substitution}.
\end{enumerate}
\end{remarks}

\subsection{\RI-edge graphs} \label{sec: edge graphs}
Here we shall see that all \RI-tilings satisfying certain restrictions on the structure of the \RI-edges (which includes any tiling in $\Oa$) may always be assigned charges so as to also satisfy \RII. The following lemma will be useful for this purpose, as well as later when we examine the hierarchical structure of valid tilings:

\begin{figure}
	\def\svgscale{0.2}
	\centering
	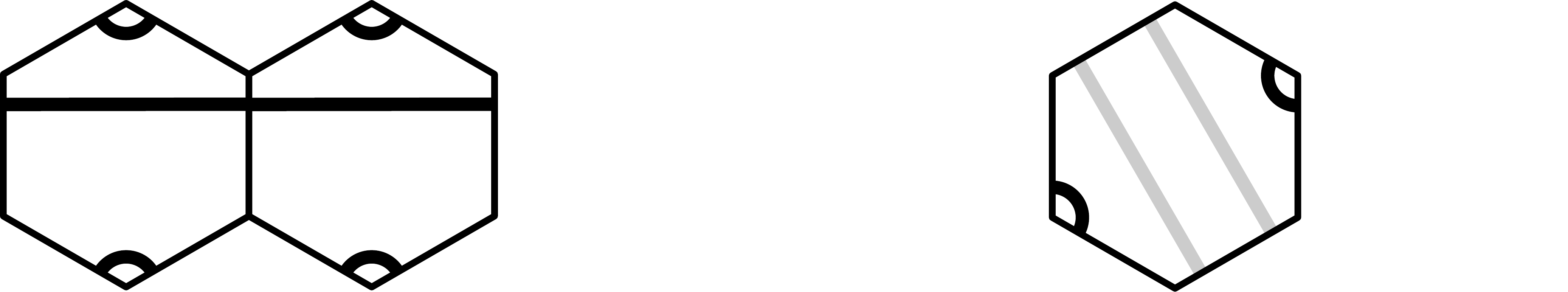
	\caption{Consistency of charges splits into three cases in the proof of Lemma \ref{lem: charge transfer sufficiency}: when two straight \RI-segments meet (left), when a straight segment meets a turn (middle) and when two turns meet (right). The two remaining cases of a straight segment meeting a turn are as in Figure \ref{fig: charge transfer}.}
	\label{fig: consistency}
\end{figure}

\begin{samepage}
\begin{lemma} \label{lem: charge transfer sufficiency}
Suppose that a tiling $T$ satisfies \RI\ and has charges assigned consistently across \RI-edges. Then $T$ is valid if and only if the following {\bf \emph{charge transfer property}} holds:
\begin{enumerate}
	\item if $E_1 \meets^N E_2$ then $\ch(E_1) = \ch(E_2)$;
	\item if $E_1 \meets^F E_2$ then $\ch(E_1) = \ch(E_2)^*$.
\end{enumerate}
\end{lemma}
\end{samepage}

\begin{proof}
By Lemma \ref{lem: no chains}, a valid tiling satisfies the charge transfer property. So suppose, conversely, that $T$ satisfies \RI\, has charges applied consistently across edges, and that the charge transfer property holds. We must show that \RII\ is also satisfied. Tiles can meet in one of three ways at an edge, either meeting either two, one or zero straight \RI-segments:
\begin{enumerate}
	\item If an edge meets two straight line segments (left of Figure \ref{fig: consistency}) then \RII\ is satisfied at this edge, since charges on \RI-edges are consistent.
	\item If a straight line meets a turn over an edge, either the turning tile is at the terminus or origin of the \RI-line. In the former case (middle of Figure \ref{fig: consistency}) there are no requirements for consistency of \RII. The latter case is depicted in Figure \ref{fig: charge transfer}; by Lemma \ref{lem: no chains} consistency is guaranteed by the charge transfer property.
	\item If two turns meet at the edge, \RII\ does not impose any restrictions, as seen on the right of Figure \ref{fig: consistency}.
\end{enumerate}
It follows that $T$ satisfies \RII\ and hence is valid, as required.
\end{proof}

Given a tiling satisfying \RI\, we construct an infinite directed graph $G$, called the \emph{\RI-edge graph}, from the \RI-edges by removing the turns, retaining orientations on \RI-edges and extending $E_1$ forwards to meet $E_2$ whenever $E_1 \meets E_2$, see Figure \ref{fig: edge graph}. We note that each path component of this graph has exactly the tree structure defined by the growth condition of the tilings in \cite{MamWhi19}. Indeed, every valid tiling in \cite{MamWhi19} defines a valid tiling under the rules here using the \RI-edge graph. However, the rules defined in \cite{MamWhi19} are not local in the sense of matching rules, and do not lead to a compact hull of tilings as they do in this paper (see the following section).

\begin{figure}
	\def\svgscale{0.08}
	\centering
	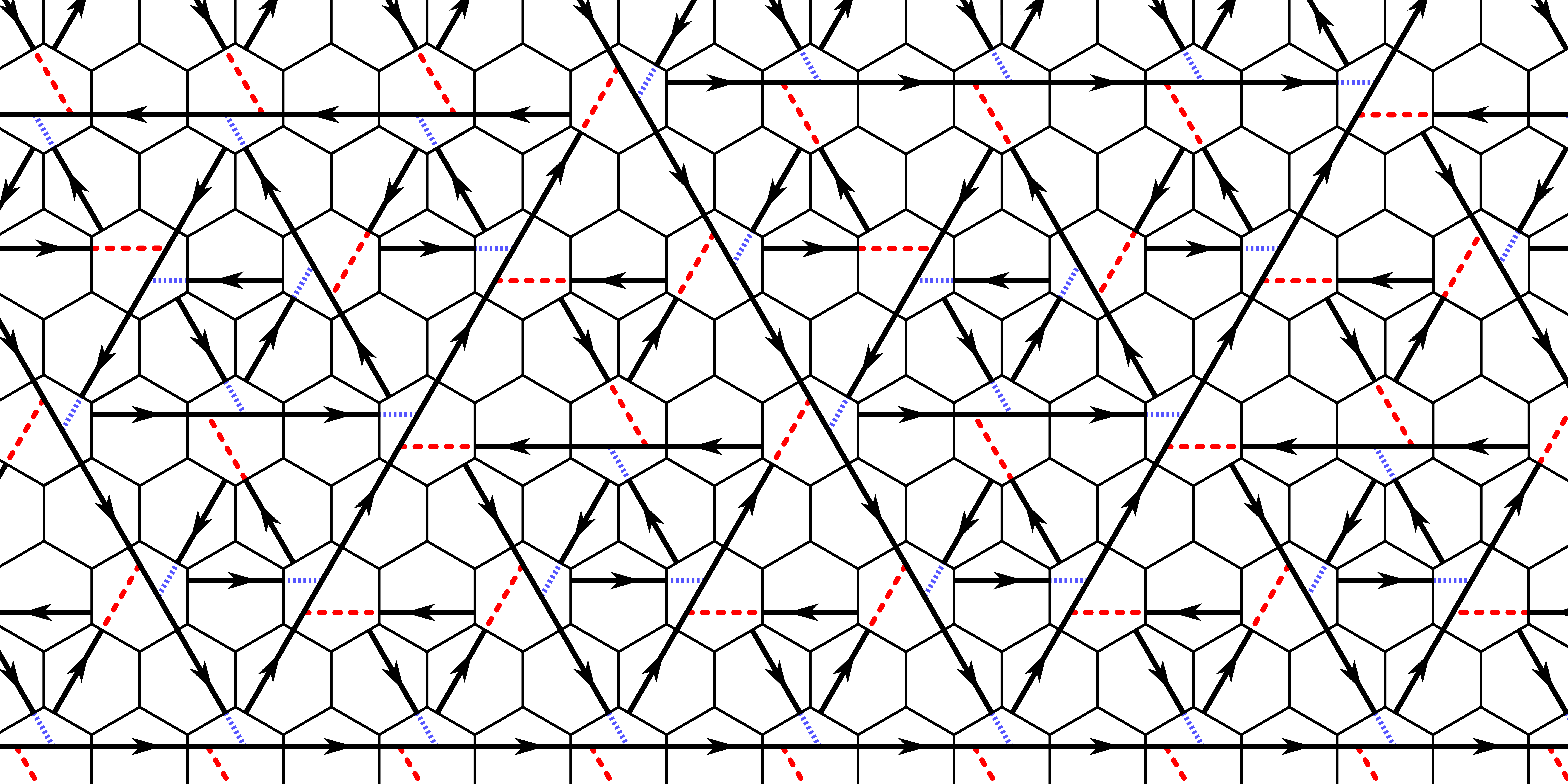
	\caption{The \RI-edge graph. Extensions of an edge $E_1$ to $E_2$ with $E_1 \meets^N E_2$ are given by dashed blue lines, those with $E_1 \meets^F E_2$ are red dotted lines.}
	\label{fig: edge graph}
\end{figure}

\begin{lemma} \label{lem: existence}
An \RI-tiling whose \RI-edge graph $G$ has no loops and $\ell$ connected components may be assigned charges in precisely $2^\ell$ different ways so as to give a valid tiling.
\end{lemma}

\begin{proof}
For each component of $G$, choose any \RI-edge and assign it one of the two possible charges. The charge transfer property forces the charges of the remaining edges of this component. Moreover, since this component is a tree, we cannot encounter inconsistencies in doing this. It follows that we may consistently apply charges \RI-edges in two ways to this component so as to satisfy the charge transfer property. Repeating for each component gives a consistent tiling by Lemma \ref{lem: charge transfer sufficiency}.
\end{proof}

The \RI-edge graph $G$ of any tiling $T \in \Oa$ has no loops. To see this, note that in any standard patch $P_n$, for any two (necessarily finite) edges $E_1$ and $E_2$ with $E_1 \meets E_2$ we have that $L(E_1) < L(E_2)$; this can be shown by induction, since $P_n$ is formed by embedding into the outer triangle of $P_n$ a copy of $P_{n-1}$ and three copies of its interior tiles. So the limiting tilings of $\Oa$ also have this property. Hence, since a directed path in $G$ can only take one from edges to longer edges, there can be no loops, and by the above may be consistently assigned charges to make valid tilings.

Combining the above existence result  with Theorem \ref{thm:nonperiodic} proves Theorem \ref{thm: main}, the main result of the paper.

\section{The hull of tilings} \label{sec: hull}

In this section we shall explain the structure of the \RI-triangles in valid tilings, which will allow us to classify the set of all valid tilings. We denote the set of all valid tilings by $\Om$. This set carries a natural topology (see \cite{Sad08} for an introduction to the topological study of aperiodic tilings). The space $\Om$ is sometimes called the \emph{(continuous) hull}. Belonging to the hull is a local (in fact, edge-to-edge) condition and as a consequence $\Om$ is a compact space.

The rule \RI\ alone limits the possible sizes of the \RI-triangles and the structure of \RI-triangles inside of them. For $n \in \N$ we define $a(n) = 2^i$ where $i \in \N_0$ is chosen as large as possible with $2^i$ dividing $n$. Recall that for an \RI-triangle $\Delta$ we denote its size by $s(\Delta) = L(E) + 1$, where $L(E)$ is the length of any of its \RI-edges.

\begin{figure}
	\def\svgwidth{\textwidth}
	\centering
	\import{}{triangle_sequence.pdf_tex}
	\caption{Proof of Proposition \ref{prop: R1 structure}, for $k = 16$ (the 16 tiles up and to the right of $t$ are not all indicated). The \RI-edges to the right and top-right of $t$ are allowed to be offset in two possible ways, so these lines are marked in grey. Notice that the sizes of triangles are, respectively, 1, 2, 1, 4, 1, 2, 1, 8, 1, 2, 1, 4, 1, 2, 1 and finally $s(\Delta_{16}) = 16$: induction using the right-edge of $\Delta_8$ and the edge between $t_8$ and $t_{16}$ forces the blue triangles, which force the line meeting $t_{16}$.}
	\label{fig: triangle sequence} 
\end{figure}

\begin{prop} \label{prop: R1 structure}
Suppose a tiling $T$ satisfying \RI\ is given. For any \RI-triangles $\Delta$ in $T$ we have that $s(\Delta) = 2^n$ for some $n \in \N \cup \{0\}$. Moreover, suppose that $E_1$ and $E_2$ are \RI-edges leading to or from a tile $t$ which, without loss of generality, are positioned so that $E_1$ is horizontal and $E_2$ extends up and to the right from $t$ (see Figure \ref{fig: triangle sequence}). Suppose that there are at least $k$ tiles from the right of (and not including) $t$ along $E_1$, and similarly $k$ tiles from $t$ up and to the right along $E_2$. Consider the \RI-triangles $\Delta_1$, $\Delta_2$,\ldots, which share a tile with $E_1$, naturally ordered by where they meet $E_1$ from left to right. Then $s(\Delta_n) = a(n)$ for all $n = 1,\ldots,k$.
\end{prop}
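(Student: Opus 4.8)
The plan is to prove both statements simultaneously by induction on the available length $k$, exploiting the forcing behaviour of the \RI-rule near a turn. The key geometric observation is the one illustrated in Figure \ref{fig: triangle sequence}: a single turn, together with the requirement that \RI-lines are continuous and that turns are always leftward, pins down a great deal of the decoration in the wedge between $E_1$ and $E_2$. First I would establish the base cases directly: if only $k=1$ tile is available beyond $t$ on each of $E_1$ and $E_2$, then the tile immediately to the right of $t$ along $E_1$ carries a turn (it cannot carry a straight segment extending $E_1$ without more room), so $\Delta_1$ is a loop of three turns, giving $s(\Delta_1)=1=a(1)$. The size-$2^n$ claim for individual triangles follows once we know $s(\Delta_n)=a(n)$ for arbitrarily large windows, since $a(n)$ only ever takes the values $2^0,2^1,2^2,\dots$; alternatively one notes a triangle of size $m$ sits inside the configuration forced by a turn, and the induction below shows no other sizes can occur.

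Next I would run the induction step. Suppose the claim $s(\Delta_j)=a(j)$ holds for all $j<2^m$ whenever at least $2^{m}$ tiles are available, and suppose now $k\ge 2^{m}$ tiles are available along $E_1$ and along $E_2$. The tiles $\Delta_1,\dots,\Delta_{2^m-1}$ are forced with sizes $a(1),a(2),\dots,a(2^m-1)$ by the inductive hypothesis applied to the sub-configuration hanging off the turn at $t$. The crux is the tile $t_{2^m}$ sitting $2^m$ steps to the right of $t$ along $E_1$: I claim it must carry a turn starting the edge of a triangle $\Delta_{2^m}$ of size exactly $2^m$. The argument is that the largest triangle produced so far, $\Delta_{2^{m-1}}$, has an \RI-edge of length $2^{m-1}-1$ whose terminating turn, together with the symmetric configuration coming down from $E_2$, forces (by the inductive hypothesis again, now applied one level up) the nested family of blue triangles in Figure \ref{fig: triangle sequence}; continuity of the outermost of these \RI-lines forces a turn precisely at $t_{2^m}$ and forces that turn to begin a triangle of size $2^m$. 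One then observes that the full pattern $a(1),\dots,a(2^m)$ has been reconstructed, that the configuration is again self-similar off the turn at $t_{2^m}$, and the induction continues up to the given $k$.

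The main obstacle, and the step to spell out most carefully, is the forcing claim that the tile $2^m$ steps along $E_1$ is compelled to carry a turn opening a triangle of size $2^m$ — i.e.\ that no larger or differently-sized triangle, and no straight line, can occur there. This is where the offset of the straight \RI-segments does the work: the two possible offsets of the grey lines to the right and top-right of $t$ mean that the wedge between $E_1$ and $E_2$ is a triangular region whose \RI-decoration is determined row by row, and a straightforward but slightly delicate bookkeeping of which corners the turns can occupy shows the only consistent completion is the hierarchical one. I would present this as a lemma about the configuration forced in the wedge between two \RI-edges meeting at a turn, prove it by the same induction, and then read off Proposition \ref{prop: R1 structure} as the statement about the sizes $s(\Delta_n)$. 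The bound $s(\Delta)=2^n$ for an arbitrary triangle $\Delta$ then follows because any \RI-edge $E$ of $\Delta$ leads, via $E\meets E'$, to a turn, and the wedge lemma applied there forces $\ell(E)+1$ to be a power of two.
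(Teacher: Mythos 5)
Your overall architecture matches the paper's: induct on the window size, get $s(\Delta_1)=1$ as the base case, handle indices $N$ with $2^{n-1}<N<2^n$ by re-running the induction along the right-hand edge of $\Delta_{2^{n-1}}$ together with the remainder of $E_1$, and isolate $N=2^n$ as the critical case where a new largest triangle must appear. The difficulty is that the one step you yourself flag as ``the main obstacle'' --- that the tile $2^n$ steps along $E_1$ is compelled to open a triangle of size exactly $2^n$ --- is precisely the content of the proposition at that scale, and you never supply the argument: ``a straightforward but slightly delicate bookkeeping of which corners the turns can occupy'' is a placeholder, and deferring it to an unproved ``wedge lemma'' established ``by the same induction'' risks circularity unless you make explicit that the wedge statement at scale $2^n$ only invokes scales strictly below $2^n$.

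The paper's mechanism for this step is concrete and is the missing idea: consider the row $R$ of tiles running from $t_{2^n}$ up and to the left until it meets $E_2$. Its middle tile contains the top-right turn of $\Delta_{2^{n-1}}$ and hence carries a straight \RI-segment parallel to $R$; the tile halfway between that and $t_{2^n}$ contains the top-right turn of $\Delta_{2^{n-1}+2^{n-2}}$ (already forced by the inductive hypothesis) and so also carries a segment parallel to $R$; iterating this dyadic subdivision shows every tile of $R$ between its midpoint and $t_{2^n}$ carries a parallel straight segment, and these must be contiguous, i.e.\ they form a single \RI-edge with at least $2^{n-1}$ straight sections. The first half of the induction (all triangle sizes are powers of $2$) then upgrades this to length at least $2^n-1$, and the edge cannot be longer without crossing $E_1$ or $E_2$, giving $s(\Delta_{2^n})=2^n$. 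Without some version of this dyadic filling-in your induction does not close. (A minor separate point: in your base case, $t_1$ does carry a straight segment of $E_1$; the turn of $\Delta_1$ is a different decoration on that same tile, so your parenthetical justification there is off even though the conclusion $s(\Delta_1)=1$ is correct.)
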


\begin{proof}
Beginning at the left, for any $k$ we have $s(\Delta_1)=1$, since the two turns of $\Delta_1$ in $E_1$ and $E_2$ are already connected without straight edges. Suppose now that the result above on the sizes $s(\Delta_i)$ holds for all $k < N$, and that for all \RI-triangles $\Delta$ with $s(\Delta) < N$ we have that $s(\Delta) = 2^n$ for some $n \in \N \cup \{0\}$.

We claim that if there is a triangle of size $N$, then $N$ is a power of $2$. Indeed, let $n \in \N$ be such that $2^{n-1} < N \leq 2^n$. Take two edges $E_1$ and $E_2$ of $\Delta$ and note that they are positioned as in the statement of the proposition, so by induction we have that $s(\Delta_{2^{n-1}}) = 2^{n-1}$. We see that $\Delta_{2^{n-1}}$ will only fit inside of $\Delta$ if $s(\Delta) \geq 2^n$, hence $s(\Delta) = 2^n$, as required.

Next we show that for $E_1$ and $E_2$ as in the statement of the proposition, $s(\Delta_N) = a(N)$. Take $n \in \N$ such that $2^{n-1} < N \leq 2^n$. Let us write $t_0 = t$, and $t_i$ to be the $i^{\text{th}}$ tile from $t$ along $E_1$, that is, the tile shared with $\Delta_i$. If $N < 2^n$ then by induction on the edge $E_1$ and the right-hand edge of $\Delta_{2^{n-1}}$ we see that $s(\Delta_N) = a(N - 2^{n-1}) = a(N)$, as required. Indeed, the tile $t_N$ is distance $N-2^{n-1} < 2^{n-1}$ from $t_{2^{n-1}}$, and $s(\Delta_{2^{n-1}}) = 2^{n-1}$, so the right-hand side of $\Delta_{2^{n-1}}$ is long enough to force $\Delta_N$.

Finally, suppose that $N = 2^n$; we wish to show that $s(\Delta_N) = 2^n$. Consider the collection $R$ of tiles heading in a straight line from $t_N$ up and to the left, terminating at $E_2$ (those in view in Figure \ref{fig: triangle sequence} are shaded in grey, with $N=16$). Notice that the middle tile $s \in R$ contains the top right turn of $\Delta_{2^{n-1}}$, so the straight \RI-segment of this $s$ is parallel to the row of tiles $R$.\footnote{Notice that if \RII\ is also satisfied, we may conclude already that the \RI-edge $E$ passing through $s$ has length at least $L(E) > 2^{n-1}$ by Lemma \ref{lem: length}, and so extends all the way to $t_N$. Since there are no triangles of size between $2^{n-1}$ and $2^n$ by induction, we may thus already conclude that $s(\Delta_N) = N$. However, as is shown, \RII\ is not needed here.} Similarly, the triangle $\Delta_{(2^{n-1} + 2^{n-2})}$ (given by $\Delta_{12}$ in Figure \ref{fig: triangle sequence}) has top right corner in a tile of $R$ lying half-way between $t_N$ and $s$, so the straight \RI-segment of this tile is also parallel to $R$. We may repeat this to see that each tile in $R$ between $s$ and $t_N$ has a straight line segment running parallel to the direction of $R$, so these segments must be contiguous and form part of an \RI-edge running at least between $s$ to $t_N$. This \RI-line is already composed of $2^{n-1}$ straight sections, and so by the first part of the proof, restricting the sizes of triangles, it must in fact be of length at least $2^n-1$ (alternatively, we could repeat the previous argument on the triangular region between $E_2$ and the top edge of $\Delta_{2^{n-1}}$). It cannot be longer, or else it would pass through $E_1$ or $E_2$, so we conclude that $s(\Delta_N) = 2^n = a(N)$, as required.
\end{proof}

Notice that for $k = 2^n$ we have that $s(\Delta_{k/2}) = k/2$. So for any \RI-triangle (or more generally, a triangular region bounded by \RI-edges), Proposition \ref{prop: R1 structure} shows that \RI\ alone forces an \RI-triangle of half the size positioned with corners at the midpoint of the bounding edges. So the hierarchical pattern of triangles given in constructing the standard patterns in Section \ref{sec: existence} is forced by \RI.

Given a tiling $T \in \Om$, let $\RI(T)$ be the associated \emph{\RI-tiling} given by forgetting the \RII-decorations. Similarly, we let $\RI(\Om) = \{\RI(T) \mid T \in \Om\}$. We now determine which tilings can belong to $\RI(\Om)$, which can be split into the following classes:
\begin{enumerate}
	\item There is no infinite \RI-line and:
		\begin{enumerate}
			\item every \RI-triangle is contained in infinitely many others or
			\item every \RI-triangle is contained in only finitely many others.
		\end{enumerate}
	\item There is at least one infinite \RI-line where:		
		\begin{enumerate}
			\item there is an infinite \RI-line containing no turn;
			\item every infinite \RI-line contains a turn.
		\end{enumerate}
\end{enumerate}

Recall from Definition \ref{def: standard tilings} that $\Oa$ is the collection of tilings whose finite patches are contained in translates of the standard patches $P_n$ given in Section \ref{sec: existence}.

\begin{lemma}
If $T \in \iRI(\Om)$ and is in the class 1a, then $T \in \iOa$.
\end{lemma}

\begin{proof}
Since every triangle is contained in a larger one, we may construct an infinite nested sequence of triangles $\Delta_1$, $\Delta_2$, \ldots whose interiors cover the entire plane. By Proposition \ref{prop: R1 structure}, the interiors of these triangles are forced to have standard \RI-decorations, so $T \in \Oa$, as required.
\end{proof}

\begin{lemma}
If $T \in \iRI(\Om)$ and is in the class 1b, then $T \in \iOa$.
\end{lemma}

\begin{proof}
Consider an `outer \RI-triangle' $\Delta$ of $T$, that is, one which is not contained in any other triangle. Take an edge $E_1$ of $\Delta$ and follow the sequence of edges $E_1 \meets E_2 \meets E_3 \meets \cdots$. Since the triangle containing $E_k$ is exterior to that containing $E_{k+1}$, we have that $E_k \meets^F E_{k+1}$ for all $k$. Analogously to the proof of Lemma \ref{lem: length}, we thus construct a spiral of edges whose triangles are all exterior to each other. Lemma \ref{lem: length} implies that the edges become longer, infinitely spiralling outwards. Since all triangles are exterior to each other, we see that arbitrarily large patches about the initial edge $E_1$ are contained in the triangular regions bounded by $E_k$ and $E_{k+1}$. But the pattern of \RI-triangles in such regions are forced to be standard ones also found between edges of \RI-triangles, by Proposition \ref{prop: R1 structure}. So arbitrarily large patches about $E_1$ are patches of tilings of $\Oa$, so $T \in \Oa$, as required.
\end{proof}

In summary, those generic tilings $T \in \RI(\Om)$ without an infinite \RI-line are also in $\Oa$. By Lemma \ref{lem: existence} and the remarks following it, we may choose compatible \RII-decorations for such tilings to give valid tilings. The edge graph is typically, but not always, connected (see Lemma \ref{lem: connected components}).

\begin{lemma}
Suppose that $T \in \iRI(\Om)$ and is in the class 2a. Let $L$ be the unique infinite \RI-line without turn. Then $T$ is a union of two partial tilings $T_1$ and $T_2$, where $T_1$ covers a half-plane one side of $L$ and $T_2$ the other, and where each $T_i$ is a subset of tiles from some $T_i' \in \iOa$.
\end{lemma}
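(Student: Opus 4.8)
The plan is to exploit the same spiralling construction used in Lemmas \ref{lem: length} and the class 1b argument, now applied to the two half-planes on either side of $L$. First I would fix an orientation so that $L$ is horizontal, directed to the right, and consider the collection $A = \{\Delta_i\}$ of \RI-triangles that share a turning tile with $L$ (as in Figure \ref{fig: row of triangles}), separating them into those whose interiors lie above $L$ and those whose interiors lie below. The key observation is that $L$, being infinite and turn-free, acts as a boundary that no \RI-line on one side can cross: any \RI-edge beginning in the upper half-plane either stays there or terminates at a turn on $L$ before it could reach the lower half-plane, and similarly below. So the \RI-decorations above $L$ and below $L$ are completely independent, and it suffices to treat one side, say the upper partial tiling $T_1$.

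Next I would argue that $T_1$ extends to a tiling $T_1' \in \iOst$. Since $L$ is infinite, for each triangle $\Delta \in A$ on the upper side one of its two edges not meeting $L$ heads up and to the right; following the chain $E_1 \meets E_2 \meets \cdots$ from that edge, Lemma \ref{lem: length} forces the lengths to increase and, exactly as in the class 1b proof, each successive triangle is again forced to be `outer' on the upper side and to spiral outward, with all triangles exterior to one another. Alternatively, and perhaps more cleanly: by Lemma \ref{lem: length} applied along $L$ (the argument in the proof of Theorem \ref{thm:nonperiodic}, infinite-line case), there is no largest triangle in $A$ on the upper side, so the sizes of upper triangles attached to $L$ are unbounded. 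Either way, any finite patch $P$ of $T_1$ lies, for $n$ large, inside the triangular region bounded by some sufficiently large triangle of $A$ (or between two edges $E_k, E_{k+1}$ of the spiral), and by Proposition \ref{prop: R1 structure} the \RI-pattern inside such a region is a standard one, i.e.\ a sub-patch of some $P_n$. Hence every finite patch of $T_1$ is a sub-patch of some $P_n$, and by a compactness/King's-lemma argument — extending $T_1$ arbitrarily far using the $P_n$'s and passing to a limit — we obtain $T_1' \in \iOst$ with $T_1 \subseteq T_1'$. The same applies to $T_2$ below $L$, giving $T_2' \in \iOst$.

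The main obstacle I anticipate is making precise the claim that the half-plane above $L$ is genuinely `filled out' by standard regions — that is, that every tile of $T_1$, not merely every tile near $L$, lies in such a forced region. One must rule out the possibility that some tiles far from $L$ escape the influence of the triangles in $A$; this is where I would lean on the spiralling triangles from the class 1b argument, whose nested triangular regions exhaust the upper half-plane (their interiors grow without bound and each contains the previous patch), together with Proposition \ref{prop: R1 structure} to identify the \RI-decoration inside each region as standard. A secondary technical point is the passage from ``every finite patch is a sub-patch of some $P_n$'' to the existence of a single $T_1' \in \iOst$ containing all of $T_1$: this is routine compactness, extending $T_1$ within the $P_n$'s and taking a convergent subsequence in the tiling metric, but it should be stated. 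Finally I would note that $T = T_1 \cup T_2$ by construction, since every tile of $T$ is on one side of $L$ or is one of the turning tiles of $L$ itself, which we may include in either $T_i$, completing the proof.
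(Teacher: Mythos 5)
Your overall strategy --- restrict to one half-plane, build the chain of triangles $\Delta_1 \meets \Delta_2 \meets \cdots$ attached to $L$, use Lemma \ref{lem: length} to make them grow, and invoke Proposition \ref{prop: R1 structure} to identify the forced regions as standard before a compactness step --- matches the paper's proof up to its crucial final point, but it has a genuine gap exactly at the step you flagged as the ``main obstacle''. The claim that every finite patch of $T_1$ eventually lies inside the forced region of some sufficiently large triangle of $A$ is false in general. Lemma \ref{lem: length} together with Proposition \ref{prop: R1 structure} only gives $s(\Delta_{i+1}) \geq 2\, s(\Delta_i)$, and if $s(\Delta_{i+1}) = 2\, s(\Delta_i)$ for all large $i$ (which does occur: this is precisely the size pattern $1,2,4,8,\dots$ of the chain of triangles along an edge of a large triangle in a standard patch $P_n$), then a short computation shows that the forced regions --- each being $\Delta_i$ together with the two triangular regions between its left and right edges and $L$ --- all share a common left boundary ray at $60^\circ$ to $L$. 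Their union is then only a wedge, not the half-plane, so tiles to the left of that ray are never captured. Your proposed fix, appealing to the spiral of the class 1b argument, does not apply here: the triangles meeting $L$ cannot spiral (the infinite line $L$ blocks any return), they simply march monotonically to the right, and nothing forces $s(\Delta_{i+1}) > 2\, s(\Delta_i)$ infinitely often, which is what would be needed for the left ends of the forced regions to tend to $-\infty$.

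The paper closes this gap with a dichotomy: either $s(\Delta_{i+1}) > 2\, s(\Delta_i)$ infinitely often, in which case the forced regions do exhaust the half-plane and your argument goes through, or $s(\Delta_{i+1}) = 2\, s(\Delta_i)$ eventually, in which case one runs an argument analogous to the last paragraph of the proof of Proposition \ref{prop: R1 structure} to produce an \emph{infinite} \RI-triangle meeting $L$ along the limiting boundary ray, and then applies Proposition \ref{prop: R1 structure} to that infinite triangle to force the standard decoration on the remainder of the half-plane. You would need to add this second branch (and, relatedly, handle the possibility that some triangle of $A$ is already infinite, which is the alternative left open by the argument in the proof of Theorem \ref{thm:nonperiodic} that you cite). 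Your remaining points --- the independence of the two sides of $L$, and the compactness argument extending $T_1$ to a full tiling $T_1' \in \Ost$ --- are fine, and the latter is if anything more careful than the paper, which leaves it implicit.
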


\begin{proof}
Consider the pattern of \RI-triangles one side of $L$, and take any triangle $\Delta$ in it with turn on a tile carrying $L$. The right-hand edge of $\Delta = \Delta_1$ leads to the left-hand edge of another triangle $\Delta_2$, which also shares a turn with $L$ by Lemma \ref{lem: length}. Repeating this we construct a sequence of triangles $(\Delta_i)_{i \in \N}$ which meet tiles of $L$. Note that by Lemma \ref{lem: length} and Proposition \ref{prop: R1 structure}, we have that $s(\Delta_{i+1}) \geq 2 s(\Delta_i)$ and then the triangles $\Delta_i$ force standard \RI-decorations in increasingly large regions in the half-plane. More precisely, the union of the inside of $\Delta_i$, the triangular region between its left edge and $L$ and the other triangular region between its right-edge and $L$ are forced to be a standard decoration seen in some $P_n$. These forced standard patches either cover the whole half space, or $s(\Delta_{i+1}) = 2s(\Delta_i)$ for sufficiently large $i$. In the latter case, an analogous argument to that in the final paragraph of the proof of Proposition \ref{prop: R1 structure} implies the existence of an infinite \RI-triangle meeting $L$ to the left of $\Delta$. Again by Proposition \ref{prop: R1 structure}, this forces a standard \RI-decoration on this half-plane.
\end{proof}

For a tiling $T$ as above with $T \notin \Oa$, we call $T$ a tiling with an \emph{infinite fault line}. For tilings $T \in \Oa$, a triangle meeting $L$ has the same size as the triangle opposite it across $L$. But since our matching rules are edge-to-edge, one may freely shift the tiles of one half-plane relative to the other half. Again, by Lemma \ref{lem: existence}, any such tiling can be equipped with compatible \RII-decorations and so is an element of $\RI(\Om)$.

\begin{figure}
	\def\svgscale{0.07}
	\centering
	\import{}{cycle.pdf_tex}
	\caption{In case 2b, the three infinite lines with turns are forced to arrange themselves as a cycle. The left-hand picture shows a configuration which cannot occur. The right-hand picture, in this case an infinite $1$-cycle, can occur.}
	\label{fig: cycle}
\end{figure}

Finally, suppose that we are in Case 2b. Consider the infinite \RI-edges with turn $L_i$. By Lemma \ref{lem: length}, the edge of each $L_i$ heading towards the turn leads to another infinite edge. Then the only possibility is for there to be three infinite \RI-edges, $L_1$, $L_2$ and $L_3$, arranged in a cyclic fashion as in Figure \ref{fig: cycle}. Let $E_i$ be the edge of $L_i$ leading to the turn and $E_i'$ be the other edge of $L_i$. There are two cases: either $E_i \meets^F E_{i+1}$ or $E_i \meets^F E_{i+1}'$ for all $i$ (considered modulo $3$, and with the $L_i$ ordered appropriately). The first case (left of Figure \ref{fig: cycle}) is ruled out by Lemma \ref{lem: no chains}. The second possibility (right of Figure \ref{fig: cycle}), however, is possible. By Proposition \ref{prop: R1 structure}, the pattern of \RI-triangles is completely determined by the number of tiles of $E_i'$ starting from the tile after that containing the turn of $L_i$ up to and including the tile containing the turning tile of $E_{i-1}$, and that there are $2^n$ such tiles for some $n \in \N \cup \{0\}$. We call such a tiling an \emph{infinite $n$-cycle}. Up to translation, for each $n \in \N$ there are precisely two such \RI-tilings (which are related by a rotation) by Proposition \ref{prop: R1 structure}. We have thus proved the following:

\begin{thm} \label{thm: classification}
If $T \in \iRI(\Om)$ then either:
\begin{enumerate}
	\item $T \in \iOa$;
	\item $T$ is a tiling with infinite fault line;
	\item $T$ is an infinite $n$-cycle tiling.
\end{enumerate}
In each case, there are $2^\ell$ tilings $T' \in \Om$ with $\iRI(T') = T$, where $\ell$ is the number of path components of the \RI-edge graph of $T$.
\end{thm}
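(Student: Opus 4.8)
The plan is to read the trichotomy off the lemmas and the class-2b discussion already in hand, after checking that the classes 1a, 1b, 2a, 2b exhaust $\iORI$, and then to count the \RII-lifts by the free-charge argument of Section \ref{sec: existence}. First I would split on whether $T$ contains an infinite \RI-line. If it does not, then either every \RI-triangle lies inside infinitely many others (class 1a), or some \RI-triangle $\Delta_0$ lies inside only finitely many, in which case the nested chain of triangles above $\Delta_0$ terminates at an outer \RI-triangle; since the proof of the Lemma for class 1b uses only the existence of a single outer triangle, both sub-cases yield $T \in \iOst$, which is conclusion (1).

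If instead $T$ does contain an infinite \RI-line, I would split on whether some such line is turn-free (class 2a) or every infinite line carries a turn (class 2b). In the first case the Lemma for class 2a writes $T$ as a union of two half-plane partial tilings, each a subset of a tiling in $\iOst$; by the definition of a fault line tiling this forces either $T \in \iOst$ or that $T$ has an infinite fault line, giving conclusion (1) or (2). In the second case I would run the argument preceding Figure \ref{fig: cycle}: Lemma \ref{lem: length} forces the turn-bound edge of each infinite line to lead to a further infinite edge, which is possible only when there are exactly three infinite lines $L_1,L_2,L_3$ cyclically arranged; Lemma \ref{lem: no chains} kills the arrangement $E_i \meets^F E_{i+1}$, and in the surviving one, $E_i \meets^F E_{i+1}'$, Proposition \ref{prop: R1 structure} identifies $T$ as an infinite $n$-cycle tiling for the integer $n$ it determines, which is conclusion (3). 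As the alternatives are exhaustive in each branch, the trichotomy follows.

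For the counting, I would fix $T \in \iORI$, let $G$ be its \RI-edge graph with $\ell$ path components, and argue that the assignment of its induced edge-charge function $\ch$ to a valid tiling $T_i$ with $f(T_i)=T$ is a bijection onto the set of maps on the edges of $G$ satisfying the transfer rules $E_1\meets^N E_2 \Rightarrow \ch(E_1)=\ch(E_2)$ and $E_1\meets^F E_2 \Rightarrow \ch(E_1)=\ch(E_2)^*$: injectivity holds because the \RI-edge charges of a valid tiling are consistent (as noted after the definition of $\ch$) and fix the reflection type, hence all decorations, of every tile — each tile lying on some \RI-edge via its straight segment — while the transfer rules are exactly Lemma \ref{lem: no chains}; surjectivity is the \RII-consistency check of Section \ref{sec: existence} (Figures \ref{fig: charge transfer} and \ref{fig: consistency}), which builds a valid tiling from any such map. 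Exactly as for $T \in \iOst$, the graph $G$ is loop-free in all three cases — following an \RI-edge always reaches a strictly longer or an infinite edge by Lemma \ref{lem: length}, and infinite edges cannot close into a loop by Lemma \ref{lem: no chains} — so each path component is a tree; on a tree one choice of charge propagates through the transfer rules without obstruction, the $\ell$ choices being independent, so there are $2^\ell$ such maps and hence $2^\ell$ tilings over $T$.

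The step needing the most care is the bookkeeping of the case split — verifying that 1a/1b and 2a/2b genuinely cover $\iORI$, in particular that case 1 collapses to the two configurations handled by the Lemmas for 1a and 1b (via the observation that one outer triangle suffices) and that in case 2b the $n$-cycle is the only survivor. A secondary point is confirming that $G$ is loop-free, hence a forest, in the fault-line and $n$-cycle cases and not only for $T \in \iOst$, which again reduces to Lemmas \ref{lem: length} and \ref{lem: no chains}; and one should record that $\ell<\infty$ for each of the three types, which can be read off from the structure provided by Proposition \ref{prop: R1 structure}, so that $2^\ell$ is literally the count.
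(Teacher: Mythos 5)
Your proposal follows the paper's own route: the same 1a/1b/2a/2b case analysis resolved by the corresponding lemmas and the cyclic-arrangement discussion for class 2b, and the same counting argument via the forest structure of the \RI-edge graph giving $2^\ell$ independent charge choices. Your only additions are welcome refinements the paper leaves implicit, namely that a single outer triangle suffices to make the 1a/1b split exhaustive and that the loop-free (hence tree) property of the edge graph persists in the fault-line and $n$-cycle cases.
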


Generically $\ell = 1$, but there are also cases where $\ell = 2$ or $3$ in Case 1 of the above theorem (see Lemma \ref{lem: connected components}). In Case 2, $\ell = 1$, $2$ or $3$, depending on if there are $0$, $1$ or $2$ infinite triangles meeting the fault line, respectively. In Case 3, $\ell = 3$.

\section{Generating valid tilings by substitution} \label{sec: substitution}

In this section we shall build a primitive and recognisable substitution rule which generates valid tilings. This will allow us to analyse frequencies of patches and deduce unique ergodicity of the hull of all valid tilings. Measure-theoretically, almost all valid tilings will belong to the minimal component of substitution tilings.

\subsection{MLD triangle tilings}
For a `stone inflation' \cite{AObook}, inflates of tiles are precisely tiled by copies of the originals. Since one may not tile the hexagon with smaller hexagons, we cannot define a stone inflation for our original tiles, although one may define a `substitution with overlaps' (like the Penrose kite and dart or rhomb substitution \cite{Pen79}) or a `pseudo-inflation' \cite{AObook, BGG12}.

We choose instead to pass to the dual triangle tiling, which will allow us to define a stone inflation that generates tilings that are MLD to valid tilings. Hence our prototile set will consist of decorated equilateral triangles. Each triangle has edges labelled with:
\begin{itemize}
	\item An arrow at each edge's centre, pointing in one of two possible directions perpendicular to the edge.
	\item A charge, either $+$ or $-$.
\end{itemize}
So each triangle edge has one of $4$ possible labellings. The direction is assigned by the direction of offset towards the \RI-line which crosses the corresponding hex-tile edge (see Figure \ref{fig: MLD}). If this arrow is pointed towards a straight section of an \RI-edge, then we assign it the same charge as this edge. Otherwise, the arrow is pointed towards the start of a left-turn, for which we may determine a charge analogously to the charge transfer property of Lemma \ref{lem: charge transfer sufficiency}. That is, if the turn $t$ is part of a hex-tile with straight \RI-segment $s$ of charge $c$, then we also label our triangle edge with charge $c$ if $s$ is offset near to $t$. Otherwise, if $s$ is offset far from $t$, then we assign our triangle edge charge $c^*$.

It is easily seen that a valid tiling by the tile of Figure \ref{fig: tile} determines, by a local-defined rule, a triangle tiling and vice versa, so two such are MLD. Unless stated otherwise, we shall assume in this section that all tilings have been converted to triangle tilings, and call such tilings \emph{valid} if their corresponding hex tilings are.

\begin{figure}
	\def\svgscale{0.15}
	\centering
	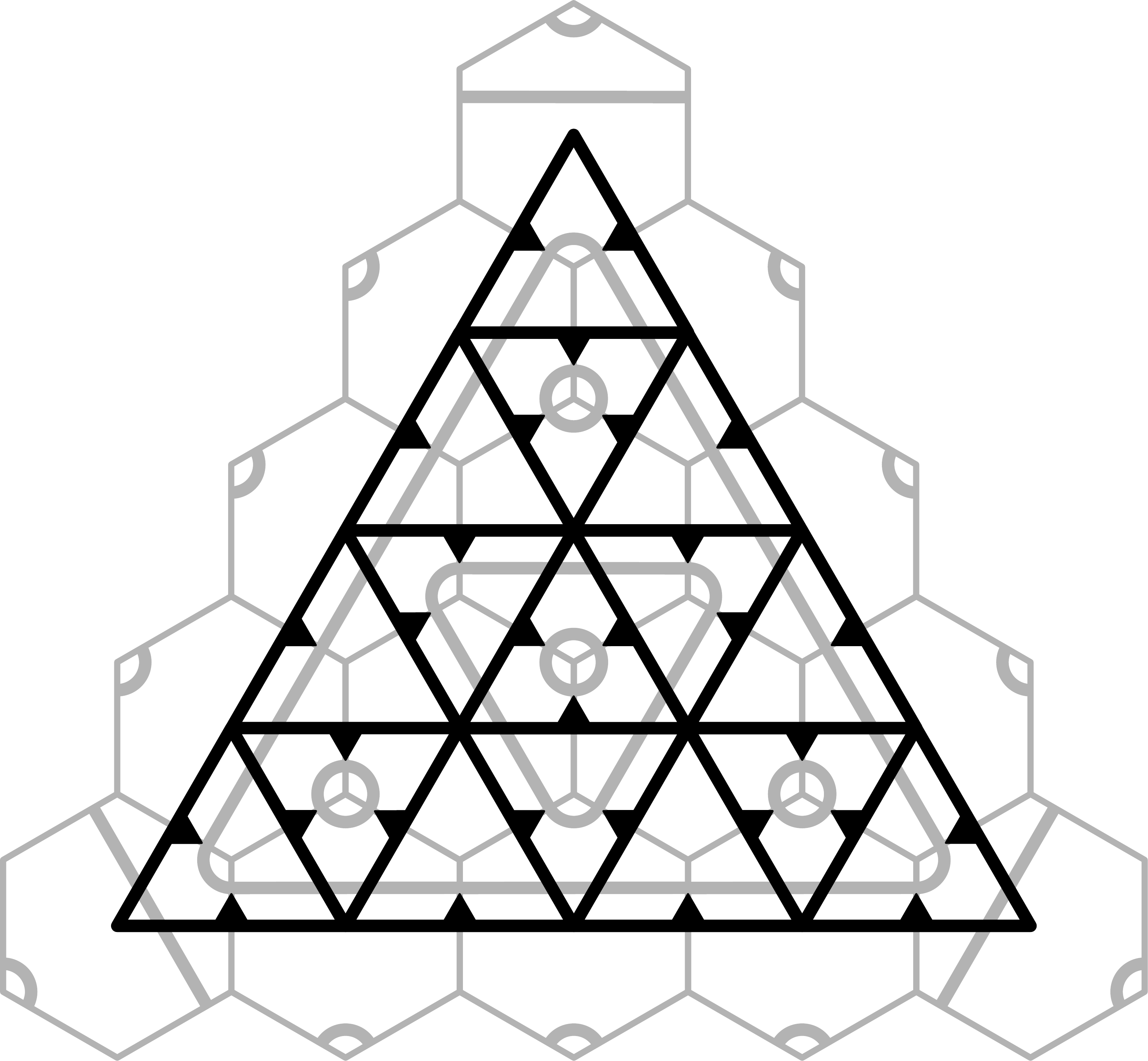
	\caption{The MLD equivalence on small patches of a valid tiling and a tiling admitted by $\varphi$, with charges removed for clarity.}
	\label{fig: MLD}
\end{figure}

\subsection{The substitution}
Given a decoration of \RI-triangles from a valid tiling, there is a natural way of associating to it another whose level $n \geq 1$ triangles correspond to those of level $(n-1)$ from the original. Simply double the size of each triangle, preserving the charges of their edges, and add new level $0$ triangles in the gaps. By Lemma \ref{lem: charge transfer sufficiency}, this gives the \RI-decorations of another valid tiling. The procedure defines our stone inflation on triangle tiles: each triangle is inflated by a factor of $2$, split into $4$ triangles of the original size, the labels on the outside edges are preserved and the central triangle has all edges directed inwards (which corresponds to adding a level-$0$ triangle). The charges of the internal edges are the unique ones which satisfy the charge transfer property of Lemma \ref{lem: charge transfer sufficiency} upon further substitutions, or equivalently how charges are assigned in the MLD relation described above. In the triangle tilings, a straight section of edges labelled with the same directions corresponds to an \RI-edge, whilst a vertex of a triangle between two inwards pointing edges corresponds to an \RI-turn.

This defines a tiling substitution $\varphi$ of finite local complexity \cite{AObook} on the set of all edge-labelled triangles (see Figure \ref{fig: sub}). As usual, we may extend $\varphi$ to act on (possibly infinite) patches. Recall that $\varphi$ is called \emph{primitive} if there exists some $N \in \N$ so that $\varphi^N(t)$ contains a copy of each tile for any prototile $t$. A primitive substitution rule is defined by the $6$ prototiles in Figure \ref{fig: sub}, along with their rotates and charge-flips. Since each tile is asymmetric, this gives a prototile set $\mathcal{P}$ of $6 \times 6 \times 2 = 72$ tiles.

A tiling is said to be \emph{admitted by the substitution} $\varphi$ if every finite patch is contained in a translate of an \emph{$n$-supertile}, which is a patch $\varphi^n(p)$ for some $n \in \N$ and $p \in \mathcal{P}$. We also refer to a $1$-supertile as simply a \emph{supertile}. We denote by $\Osub$ the set of all tilings admitted by $\varphi$.

For tilings $T$ and $T'$ with $T = \varphi^n(T')$, we call $T'$ a \emph{level-$n$ supertiling} of $T$, and a tile $t \in T'$ a \emph{level-$n$ supertile}. By primitivity, tilings admitted by $\varphi$ exist and have admitted supertilings of all levels. When the supertiling is always uniquely defined by $T$ (and then necessarily MLD to it), we call $\varphi$ \emph{recognisable}.

\begin{figure}
	\def\svgscale{0.15}
	\centering
	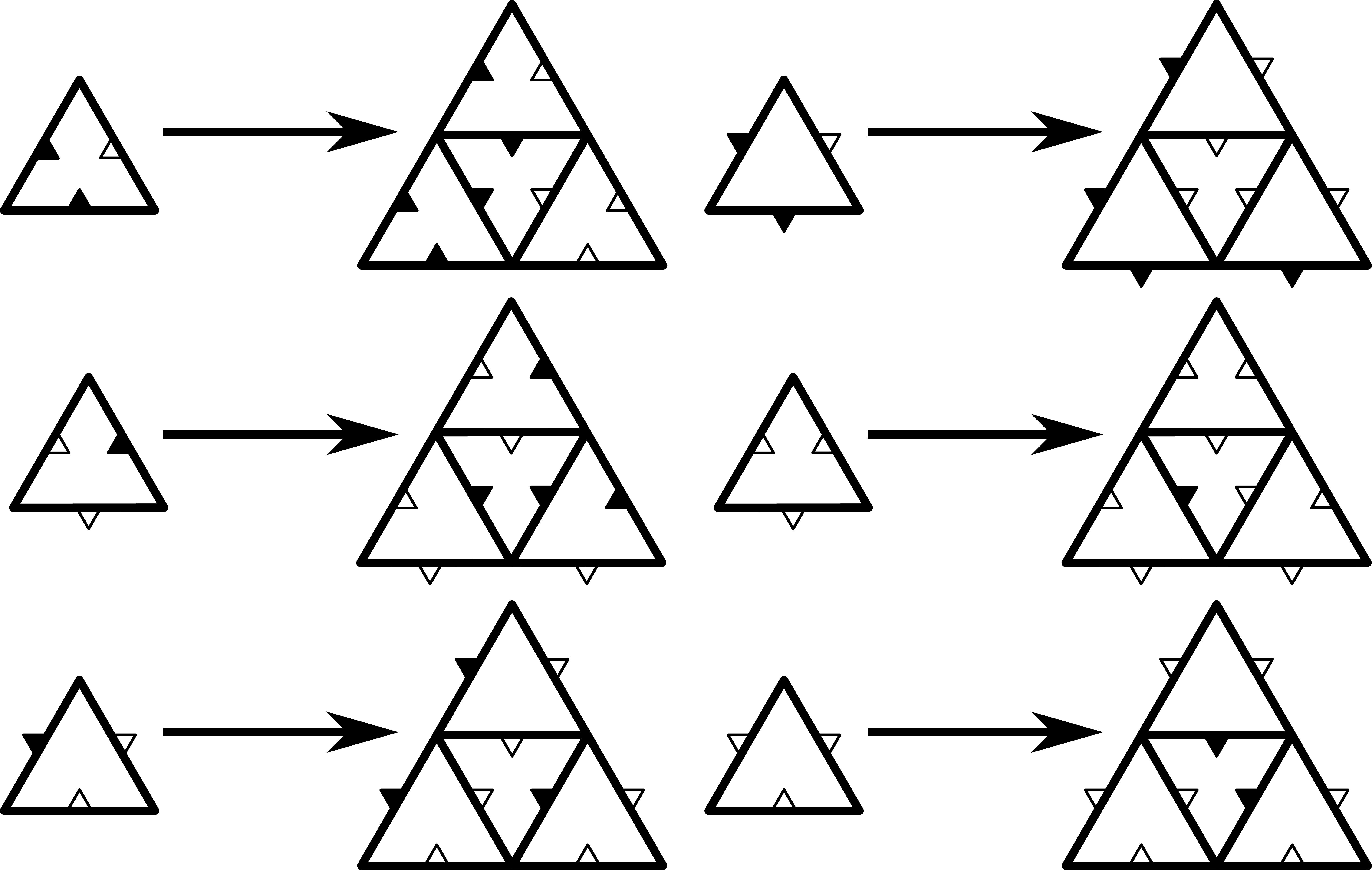
	\caption{The tiling substitution $\varphi$. Positively charged edges are indicated with black arrows, negatively charged edges have white arrows.}
	\label{fig: sub}
\end{figure}

\begin{lemma} \label{lem: primitive and recognisable}
The substitution $\varphi$ on the prototile set $\mathcal{P}$ is primitive and recognisable.
\end{lemma}

\begin{proof}
Primitivity may be checked by hand, or more quickly by a computer\footnote{We thank Franz G\"{a}hler for assisting us with this.}. Let $T$ be an admitted tiling, with associated supertiling $T'$ (which we scale here by a factor of $2$). Notice that under substitution only the central triangle of a supertile has all edges directed inwards. So the tiles of $T'$ are located periodically, with centres at precisely the locations of the all-inward directed triangles of $T$ (which correspond \RI-triangles of size $0$). There is also no choice for the labellings on the exteriors of supertiles, they must agree with the edges they overlap in $T$, since labels of exterior edges are kept constant under substitution. So the supertiling $T'$ is locally determined and uniquely defined, meaning that $\varphi$ is recognisable.
\end{proof}

\begin{lemma}
A tiling admitted by $\varphi$ is valid, that is $\Osub \subset \Om$.
\end{lemma}

\begin{proof}
By the definition of the substitution rule, an $n$-supertile $\varphi^n(p)$, for $p \in \mathcal{P}$, has charges assigned consistently to \RI-edges (since edges of length $2^n$ are doubled in length under substitution and given the same charge). The charge transfer property of Section \ref{sec: edge graphs} also holds, by the way that internal edges are assigned charges in the definition of $\varphi$. Since a tiling admitted by $\varphi$ can be covered by such patches, the corresponding hex tiling is valid, by Lemma \ref{lem: charge transfer sufficiency}.
\end{proof}

\subsection{Supertile decompositions of valid tilings}

Not all valid tilings in $\Om$ are admitted by $\varphi$. For example, tilings containing $n$-cycles or fault lines are not. However, we shall see that every valid tiling has some form of decomposition into supertiles of all levels. This will allow us to deduce that all valid tilings have uniform patch frequencies, even if they are not admitted by $\varphi$.

\begin{figure}
	\def\svgscale{0.03}
	\centering
	\import{}{omitted.pdf_tex}
	\caption{Tiles omitted from prototile set $\mathcal{P}$ are given by the above three classes, where we take all charge-flips and rotates. The third class may have right-hand edge labelled freely.}
	\label{fig: omitted}
\end{figure}

Recall that in defining the tiling substitution $\varphi$, we kept only $72$ of the $2 \times (4^3) = 128$ possible decorated triangles for our prototile set. We omit precisely the tiles of Figure \ref{fig: omitted}, along with their rotates and charge-flips. The first two are preserved under rotation by $2\pi/6$, so together account for $8$ tiles. The third class is asymmetric and may have right-hand direction and charge assigned freely, so these account for the remaining $2 \times 4 \times 6 = 48$ tiles. We define $\mathcal{P}'$ to be the union of prototiles $\mathcal{P}$ and the $8$ tiles in the first and second classes of Figure \ref{fig: omitted}. Then $\varphi$ defines also a (non-primitive) substitution on $\mathcal{P}'$.

\begin{lemma} \label{lem: triangles => supertiles}
A triangular region $P$ of a valid tiling bounded between three \RI-edges is given by $\varphi^n(t)$ for $t \in \mathcal{P}'$.
\end{lemma}

\begin{proof}
The $3$ outer edges of $P$ are consistently directed and charged. Let $t$ be the triangular tile with corresponding labels of its edges. Then $t$ cannot belong to the third class of tile in Figure \ref{fig: omitted}. Indeed, in that case the left edge necessarily has to continue downwards from $P$; it cannot turn towards the bottom edge since that is directed downwards. This then violates the charge transfer property with the bottom edge.

By Proposition \ref{prop: R1 structure}, the interior of $P$ has \RI-decorations determined by the size of $P$, which must have edge lengths a power of $2$. So the \RI-decorations of $P$ agree with $\varphi^n(t)$. By construction, the outer edges of $P$ have charges agreeing with $\varphi^n(t)$. Charges of interior \RI-edges of $P$ are determined by the charge transfer property of Lemma \ref{lem: charge transfer sufficiency}. By the construction of $\varphi$, the charge transfer property holds on $\varphi^n(t)$ too, so the two patches agree.
\end{proof}

The proposition below exhibits the extent to which valid tilings may be equipped with supertile decompositions. There is an interesting singular case, which has an analogous occurrence in the Socolar--Taylor tilings. Notice that a rotated copy of $P_n$ may be found at the centre of $P_{n+1}$ (see Figure \ref{fig: standard patch}). So there is an \RI-tiling, with global $3$-fold rotational symmetry, given as the union of $P_1 \cup r(P_2) \cup P_3 \cup r(P_4) \cup \cdots$, where $r$ represents rotation by $2\pi/6$ and each patch has common centre over the origin. A rigid motion of this is called a \emph{$P_\infty$ tiling}; in \cite{LeeMoo13} they are called {\bf iCW-L} tilings, short for \emph{infinite concurrent w-line tilings}.

\begin{prop} \label{prop: valid => supertile decomposition}
Let $T$ be a valid tiling. Then:
\begin{enumerate}
	\item If the \RI-decorations of $T$ are a rigid motion of $P_\infty$, then $T$ has a unique and locally defined level $n$-supertiling for each $n \in \N$ with prototiles in $\mathcal{P}$, or possibly also one occurrence at each level of the tile in the first class of Figure \ref{fig: omitted}.
	\item If $T$ is an $n$-cycle, then the infinite \RI-edges bound $7$ regions, one of which is a translate of $\varphi^n(t)$, where $t \in \mathcal{P}$ or the second class of Figure \ref{fig: omitted}, and the other $6$ infinite patches have unique supertile decompositions with prototile set $\mathcal{P}$.
	\item If $T$ has an infinite \RI-line without turn $L$, then the infinite patches between infinite \RI-edges appear in certain tilings admitted by $\varphi$ with prototile set $\mathcal{P}$.
	\item Otherwise, $T$ is admitted by $\varphi$ with prototile set $\mathcal{P}$.	
\end{enumerate}
\end{prop}

\begin{proof}
\emph{Case (1):} There are \RI-triangles $\Delta_1$, $\Delta_2$, \ldots with identical centres and $s(\Delta_n) = 2^n$ for all $n \in \N$. By Lemma \ref{lem: triangles => supertiles}, the patch of tiles in each $\Delta_n$ is given by $\varphi^n(t)$, where $t$ is an all inward directed triangle with outer charges given by the charges of the edges of $\Delta_n$. The existence of unique and locally defined level-$n$ supertilings then follows in the same way as the proof of Lemma \ref{lem: primitive and recognisable}. Notice that once the charges for $\Delta_1$ are chosen then the charges for the remaining triangles are determined, since each edge of $\Delta_{n+1}$ is connected to precisely one edge of $\Delta_n$ in the \RI-edge graph (more precisely, we rotate the charges $2\pi / 6$ anti-clockwise between each triangle). In particular, if any $\Delta_k$ is chosen with all edges the same charge, then all other $\Delta_n$ have only this charge too. A tile corresponding to a single-charge level-$1$ triangle does not appear as the substitute of a tile from $\mathcal{P}$, and only occurs once (at the centre, rotated) of a substitute of itself, so it occurs precisely once at each level of supertiling.

\emph{Case (2):} By Lemma \ref{lem: triangles => supertiles}, the finite region bounded between the infinite \RI-edges is given by $\varphi^n(t)$, where $t$ is a triangle with all edges directed outwards. The $6$ infinite regions bounded between infinite \RI-edges are infinite unions of triangular regions of size $2^n$, for $n \in \N$, bounded between \RI-edges and with \RI-decorations determined by Proposition \ref{prop: R1 structure}, which by Lemma \ref{lem: triangles => supertiles} can be expressed as an infinite nested union of $n$-supertiles $\varphi^n(t)$ with a common corner at the intersection of \RI-edges. In fact, the edges of $\varphi^n(t)$ overlapping with the infinite \RI-edges must have the same corresponding charges, and the remaining edge has charge determined by the transfer property, so is a tile in $\mathcal{P}$.

\emph{Case (3):} Take a half-plane on one side of $L$. If there are no other infinite \RI-edges in this region then it can be covered by supertiles $\varphi^n(t)$ for $t \in \mathcal{P}$ which share an outer edge with $L$, by Proposition \ref{prop: R1 structure} and Lemma \ref{lem: triangles => supertiles}. Again, each $t \in \mathcal{P}$ by the charge transfer property. Otherwise, there is an infinite \RI-line with a turn in this half-plane, which is therefore decomposed into three regions with supertile decompositions analogously to the six infinite regions of the previous case. Repeating for the other half-plane, we see that there is a unique covering of $T$ by $n$-supertiles for each $n \in \N$.

\emph{Case (4):} If none of the above cases apply, Proposition \ref{prop: R1 structure} implies that there is an infinite nested sequence $\Delta_1$, $\Delta_2$,\ldots of triangular regions $\Delta_n$ bounded between \RI-edges with sides of length $2^n$ and whose union covers $\R^2$ (if it did not then this would lead to an infinite \RI-edge). By Lemma \ref{lem: triangles => supertiles} these triangles are given by $\varphi^n(t_n)$ for $t_n \in \mathcal{P}'$. However, $\Delta_n$ appears off-centre in $\Delta_{n+1}$ infinitely often, since $T$ is not a $P_\infty$ tiling. Since the second class of tile of Figure \ref{fig: omitted} cannot appear in a substituted tile, and the first class only appears at the centre of a substitute of itself, each $t_n \in \mathcal{P}$, so $T$ is admitted by $\varphi$.
\end{proof}

\begin{remark} \label{rem: bad tiles}
The above shows that valid tilings have unique supertile decompositions of all levels, except for possibly the $P_\infty$ tilings (which are either admitted by $\varphi$ or are admitted by extending to a non-primitive prototile set) and $n$-cycle tilings (which have supertile decompositions of all levels in $6$ infinite regions, covering the whole plane except for a central level $n$-supertile). The above proof shows the \emph{bad tiles}, given as the first two classes of Figure \ref{fig: omitted}, may occur as follows:
\begin{enumerate}
	\item The first class (an \RI-triangle with only one charge of edge) can only be part of a $P_\infty$ tiling or $n$-cycle. For the $P_\infty$ tiling, the tiling is either admitted or there is precisely one such bad tile at each level, which appears at the centre of the next one. This is similar to the situation for the Socolar--Taylor tiling, where the existence of a level-$1$ \RI-triangle forming a vertex of a single colour forces a $P_\infty$ tiling and the remaining decorations. Just as for the Socolar--Taylor tile, there are $8$ possible ways of decorating a $P_\infty$ tiling to get a valid tiling, with $6$ of them admitted by the substitution and the remaining $2$ `defect' tilings having $3$-fold rotational symmetry.
	\item We also see from the above proof that the second class of bad tile of Figure \ref{fig: omitted} may only occur in one role: as a level-$n$ triangle of an $n$-cycle tiling when this tiling has $3$-fold symmetry. In this case, precisely one bad tile occurs at the centre of the next up to level $n$.
\end{enumerate}
\end{remark}

\section{Ergodic and dynamical properties}

We now analyse some fundamental properties of the dynamical system associated to the hull of valid tilings and how they relate to the Socolar--Taylor and Penrose $(1+\epsilon+\epsilon^2)$-tilings.

\subsection{Unique ergodicity}
The supertile decompositions above will allow us to show that all patches in valid tilings appear with well-defined frequencies, from which we may deduce unique ergodicity of the tiling dynamical system. Again, we shall work with the triangular versions of our tilings.

Unique ergodicity of a tiling dynamical system given by the orbit closure of a tiling is equivalent to the tiling having \emph{uniform patch frequencies} \cite{AObook}. The hull of valid tilings is not the orbit closure of a single tiling. Indeed, $\Om$ contains $n$-cycle and fault line tilings (see Theorem \ref{thm: classification}), which contain `defective' finite patches that do not appear in orbit closures of the others. However, we can still prove that the collection of valid tilings has uniform patch frequencies in the following sense:

\begin{definition}
We say that a collection $X$ of tilings has \emph{uniform patch frequencies} if, for each finite patch $P$, there exists some $\delta_P \geq 0$, its \emph{density}, such that for all $\epsilon > 0$ there exists $R > 0$ so that
\[
\left| \frac{\#(P,T,R)}{V_R} - \delta_P \right| < \epsilon
\]
for any $T \in X$, where $\#(P,T,R)$ is the number of occurrences of $P$ inside the ball of radius $R$ centred at the origin of $T$ and $V_R = \pi R^2$ is the area of an $R$-ball.
\end{definition}

We scale our tilings so that the density of all tiles is equal to $1$. Then the densities of all possible patches of the same shape sums to $1$ and we refer to these densities as \emph{patch frequencies}. Some authors demand that frequencies are strictly positive in the definition of uniform patch frequencies, which implies that there is a unique invariant measure and that the open sets have strictly positive measure (that is, the system is \emph{strictly} ergodicity). In our case, there will exist patches with frequency $0$ in the `defect' tilings (i.e., the $n$-cycles, those with fault lines or the $P_\infty$ tilings with $3$-fold symmetry).

The hull of tilings coming from a primitive, recognisable substitution rule always has uniform patch frequencies. Using this in conjunction with the result on supertile decompositions of valid tilings from Proposition \ref{prop: valid => supertile decomposition} allows us to deduce uniform patch frequencies of all valid tilings:

\begin{prop} \label{prop: frequencies}
The collection $\Om$ of valid tilings has uniform patch frequencies.
\end{prop}

\begin{proof}
Take a patch $P$ and valid tiling $T$, which we normalise to have tiles of unit area. Then $P$ appears with frequency $\delta_P$ in any tiling of $\Osub$ (where $\delta_P = 0$ if it does not appear in any tiling of $\Osub$). We claim that $P$ appears with frequency $\delta_P$ in $T$ too.

In case (4) of Proposition \ref{prop: valid => supertile decomposition} the tiling is already in $\Osub$ so this is immediate. Cases (2) and (3) are also clear by the following argument. Take a large $k \in \N$ and remove from $T$ a neighbourhood $N$ of the boundaries of $k$-supertiles, and the central triangle of an $n$-cycle in case (2), so that occurrences of $P$ in $\R^2 \setminus N$ appear only in interiors of $k$-supertiles (which we know appear as patches in $\Osub$). For a large $k$ and $R > 0$, only a negligible proportion of any $R$-ball intersects $N$, and on $\R^2 \setminus N$ occurrences of $P$ occur with the same frequency as in tilings of $\Osub$. So we see that $P$ occurs in $T$ with this same frequency in $T$ too.

Suppose then that $T$ is in case (1) of Proposition \ref{prop: valid => supertile decomposition} and let $\epsilon_1, \epsilon_2 > 0$ be arbitrary. Choose a large integer $k \in \N$. Let $s$ be the smallest number of occurrences of $P$ inside any $k$-supertile $\varphi^k(t)$, for $t \in \mathcal{P}$. Similarly, let $S$ be the largest number of occurrences of $P$ in any level $k$-supertile, where we include potential occurrences of $P$ which may intersect the boundary. By uniform patch frequencies in $\Osub$, we may choose $k$ large enough so that
\[
2^k(\delta_P - \epsilon_1) \leq s \leq S \leq 2^k(\delta_P + \epsilon_1).
\]
Next, take some large $R > 0$. Let $n$ be the smallest number of $k$-supertiles (which may also exclude the single `bad' $k$-supertile which does not appear in $\Osub$, as in Remark \ref{rem: bad tiles}) that are contained in any $R$-ball, and $N$ be the largest number of $k$-supertiles which can intersect any $R$-ball. By making $R$ large enough relative to $2^k$, we can ensure that
\[
\frac{V_R}{2^k}(1-\epsilon_2) \leq n \leq N \leq \frac{V_R}{2^k}(1+\epsilon_2).
\]
By counting occurrences of $P$ in interiors of $k$-supertiles, or ones which may occur along boundaries of supertiles, we see that
\[
n \cdot s \leq \#(P,T,R) \leq N \cdot S + 2^k,
\]
where on the right-hand term we use the (crude) upper bound of $2^k$ possible occurrences of $P$ inside the single `bad' $k$-supertile. Hence
\[
(1-\epsilon_2)(\delta_P - \epsilon_1) \leq \frac{\#(P,T,R)}{V_R} \leq (1+\epsilon_2)(\delta_P + \epsilon_1) + \frac{2^k}{V_R}.
\]
So by making $\epsilon_1$ and $\epsilon_2$ sufficiently small, and $R$ sufficiently large relative to $k$, we see 
\[
\left|\frac{\#(P,T,R)}{V_R} - \delta_P \right|
\]
can be made as small as desired.
\end{proof}

An application of the Ergodic Theorem yields the following:

\begin{cor} \label{cor: uniquely ergodic}
The dynamical system $(\Om,\R^2)$ is uniquely ergodic, and with respect to the unique invariant measure almost every tiling of $\Om$ is in $\Om_\varphi$.
\end{cor}

\begin{proof}
One may consider a transversal to the tiling flow on the continuous hull, which here is a $\Z^2$-shift. This is given, for example, by considering tilings where the origin lies at the centre of a tile. This \emph{canonical transversal} $X$ is a totally disconnected space whose clopen sets have measure determined by patch frequencies, which exist by the above. Unique ergodicity of the continuous hull follows from that of its transversal.

Let $X_r \subset X$ denote the set of valid tilings with $r$-patch at the origin belonging, up to translation, to any substitution tiling (that is, such a patch appears in a $k$-supertile for some $k \in \N$). Then each $X_r$ has measure $1$, as the frequencies of non-admitted patches have frequency $0$ (and so, as above, by the Ergodic Theorem have measure $0$ in $\Om$). Since $X_\varphi = \bigcap_{r > 0} X_r$, continuity from above of the measure implies that $X_\varphi$ has full measure in $X$. The same then follows for the continuous hull.
\end{proof}

\subsection{Topological factors and spectral properties}
Since almost every valid tiling is in the minimal core $\Osub$, for the spectral properties of our tilings it is equivalent to study this more well-behaved space instead, which we shall do for the remainder of the paper. We observe that there is an order two homeomorphism, equivariant with respect to the action of translation, on $\Osub$ (and $\Om$) given by charge-flip. It follows that $(\Osub,\R^2)$ is not an almost everywhere one-to-one extension of its maximal equicontinuous factor (MEF), and so cannot have pure point dynamical spectrum \cite{BarKel13} (or diffraction spectrum \cite{LMS02, Dwo93}). We also consider the quotient space by charge-flip, which we denote by $\Osub^\pm$. Its elements can be naturally viewed as tilings: we remove charge labellings at edges and replace them with labels for each triangle vertex which indicate whether or not the charge flips between the vertex's adjacent edges. It is easily seen that such tilings can be produced by a substitution rule, given by that of Figure \ref{fig: sub} but charge-flip pairs of tiles are identified. This is a substitution of $36$ tiles, $6$ up to rotation equivalence.

The hull of all Socolar--Taylor tilings also has a minimal core generated by substitution, called the \emph{Taylor tilings} in \cite{BGG12}, which we denote by $\Ost$. We recall also from \cite{BGG12} the hulls of \emph{half hex tilings} $\Ohh$, \emph{arrowed half hex tilings} $\Oa$ and \emph{Penrose $(1+\epsilon+\epsilon^2)$-tilings} $\Oe$. Finally, we let $\mathbb{S}_2^2$ be the two-dimensional \emph{maximal equicontinuous factor} (\emph{MEF}) of these dynamical systems, which is given by the two-dimensional dyadic solenoid.

Recall that the arrowed half hex tilings of $\Oa$ are MLD to \emph{standard \RI-tilings}, hex tilings with \RI-decorations as appearing in the Socolar--Taylor tilings \cite{BGG12}. These tilings can be generated by the substitution rule of Figure \ref{fig: sub}, where charge decorations are removed. The half-hex tilings may be considered as such tilings where edges are not offset, but the locations and directions of \RI-turns is known. Since the offset of an \RI-edge is determined on any \RI-triangle with a turn (as this determines the inside of a triangle), the map is $1$-to-$1$ on tilings without a bi-infinite (straight) \RI-edge. On the other hand, it is $2$-to-$1$ on tilings with a bi-infinite \RI-edge, since this may be offset in one of two directions to obtain a tiling of $\Oa$. 

Elements of $\mathbb{S}_2^2$ may be regarded as sequences $T_0$, $T_1$, $T_2$, \ldots of periodic tilings of equilateral triangles of side-length $2^n$, where the standard subdivision of triangles of side-length $2^n$ into four of side-length $2^{n-1}$ takes $T_n$ to $T_{n-1}$. The topology on $\mathbb{S}_2^2$ regards two such sequences as close if their $n$\textsuperscript{th} terms are small translates of each other, for large $n$. The factor map from $\Oa$ to $\mathbb{S}^2_2$ simply records the sequence of unlabelled level-$n$ supertilings (where we use the triangular tile versions of these tilings). Triangles $t$ of $T_n$ which appear at the centres of the next level always correspond to \RI-triangles, with turns directing anti-clockwise about $t$, so the map from $\Oa$ to $\mathbb{S}^2_2$ is $1$-to-$1$ everywhere except over points having an infinite-level vertex (that is, a point which is a vertex of every $T_n$). These points correspond to {\bf CHT} tilings \cite{SocTay11}, where the map is $3$-to-$1$.

By Lemma \ref{lem: charge transfer sufficiency}, there are $2^\ell$ different ways of adding charges to a compatible \RI-tiling to obtain a valid tiling in $\Om$, where $\ell$ is the number of connected components of the \RI-edge graph. We determine this number $\ell$ below, and hence the multiplicity of the fibres of the factor maps to $\Oa$ and the MEF:

\begin{figure}
	\def\svgscale{0.15}
	\centering
	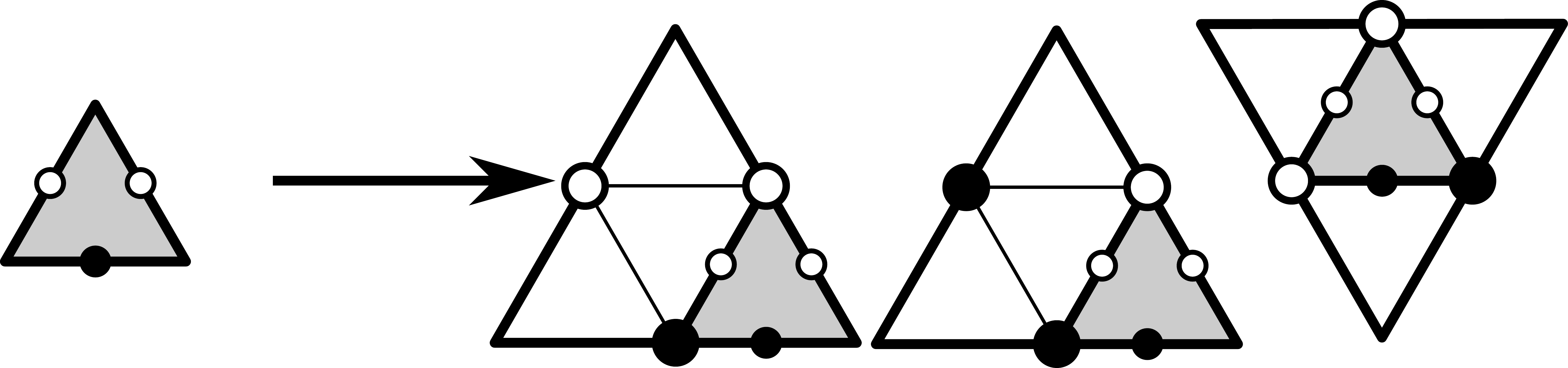
	\caption{The $\alpha$, $\beta$ and $\gamma$-moves, in this respective order above, used to create tilings with $\ell = 2$ components of \RI-edge graph. The white dots on the original tile indicate edges in the same \RI-edge graph component $X$, the black dot indicates the other component $Y$. The $3$ edges of the triangles at the next level are in the component indicated with larger dots.}
	\label{fig: 2-components}
\end{figure}

\begin{lemma} \label{lem: connected components}
Let $T \in \Om$ with $\ell$ connected components of \iRI-edge graph (so $\ell = 1$, $2$ or $3$).
\begin{enumerate}
	\item If $\ell = 3$ then $T$ is a {\bf CHT} tiling (possibly sheared along the bi-infinite \iRI-edge to make a fault-line), a $P_\infty$ tiling or an $n$-cycle. If $T \in \Osub$ then $T$ is a {\bf CHT} or $P_\infty$ tiling. There are $8$ assignments of charges to the {\bf CHT} tiling and $6$ for the $P_\infty$ tiling which give elements of $\Osub$.
	\item If $\ell = 2$ and $T$ has one bi-infinite \iRI-edge then $T \notin \Osub$. If $\ell = 2$ and has no bi-infinite \iRI-edge then $T \in \Osub$ and there is a sequence $\Delta_0 \subset \Delta_1 \subset \Delta_2 \subset \cdots$ covering $\R^2$, where each $\Delta_k$ is a $k$-supertile which, for sufficiently large $k$, is included into $\Delta_{k+1}$ in one of the placements $\alpha$, $\beta$ or $\gamma$ depicted in Figure \ref{fig: 2-components}.
	\item If $\ell = 1$ then either $\iRI(T)$ has an infinite fault line or $T \in \Osub$. Almost every element of $\Om$ has $\ell = 1$.
\end{enumerate}
\end{lemma}

\begin{proof}
It follows from Theorem \ref{thm: classification} that $\ell = 1$, $2$ or $3$. Suppose that $\ell = 3$. If $T$ is a $P_\infty$ tiling or an $n$-cycle then it is easily seen that $\ell = 3$. Otherwise, by Proposition \ref{prop: valid => supertile decomposition}, it has supertile decompositions. If $T$ is covered by a nested union of supertiles, then it must be admitted by $\varphi$. Otherwise, there must be a bi-infinite \RI-edge with two infinite \RI-triangles each side, making it a {\bf CHT} tiling, or a shear of it along the infinite \RI-edge. If $T \in \Osub$ then it follows that $T$ is a $P_\infty$ tiling or a {\bf CHT} tiling. In the former case, there is a nested union $\Delta_0$ $\subset$ $\Delta_1$ $\subset$ $\Delta_2 \subset \cdots$ where each $\Delta_k$ is a $k$-supertile (with inwards directed edges) appearing at the centre of $\Delta_{k+1}$. There are $6$ ways of labelling $\Delta_0$ with charges to obtain a prototile of $\mathcal{P}$, namely those which do not use the same charge on all edges, and this charge determines that of all of the other $\Delta_k$. For the {\bf CHT} tiling, all $8$ valid assignments of charges give a tiling of $\Osub$. This follows from a brief check that all $8$ possible charge assignments to two \RI-triangles meeting opposite across an \RI-edge can be generated by the substitution.

So now suppose that $\ell = 2$. If $T$ has a bi-infinite \RI-edge $E$ then the \RI-edge graph on one half-plane side of $E$ must be in the same component as $E$, whereas the other side must have an infinite \RI-triangle. This can happen in $\Om$ but not in $\Osub$. Suppose then that $T$ does not have a bi-infinite \RI-edge. Since $T$ is not an $n$-cycle (which has $\ell = 3$), there must be the stated sequence $\Delta_0 \subset \Delta_1 \subset \Delta_2 \subset \cdots$ of $k$-supertiles by Proposition \ref{prop: valid => supertile decomposition}; indeed, if any such sequence covered only part of the plane then there would be an infinite \RI-line. All of the supertiles are substitutes of tiles from $\mathcal{P}$, or else the tiling would be a $P_\infty$ tiling. It follows that $T$ is admitted by $\varphi$. Since $\ell = 2$, for sufficiently large $k$, two of the edges of $\Delta_k$ belong to the same component $X$ of the \RI-edge graph, and the other edge belongs to the other component $Y$. This limits the placement of $\Delta_k$ into $\Delta_{k+1}$ to one of $\alpha$, $\beta$ or $\gamma$ in Figure \ref{fig: 2-components}.

Finally, suppose that $\ell = 1$. By Proposition \ref{prop: valid => supertile decomposition}, either $T$ is admitted by $\varphi$ or has an infinite \RI-line, splitting into half-planes that are patches of certain tilings from $\Osub$. If $\RI(T)$ has a fault line then $T \notin \Osub$. If there is no fault-line in the \RI-decorations then, since $\ell = 1$, the charges of one half-plane determine those on the other and $T$ itself must be admitted by $\varphi$. Finally, we claim that almost every element of $\Osub$ (and hence $\Om$, by Corollary \ref{cor: uniquely ergodic}) has $\ell = 1$. Indeed, $k$-supertiles of all levels are positioned periodically, with two given sides being connected in the \RI-edge graph at the next level whenever they appear in one of four possible locations in the $(k+1)$-supertile containing them. So generically, any two given edges are eventually connected in some sufficiently large supertile.
\end{proof}

\begin{figure}
\begin{center}
\begin{tikzpicture}
\node at (1,-5.5) {$\scriptstyle (0,0,i)$};
\node[vertex] (vertexe) at (1,-5.5)   {$\,\quad$}
	edge [->,>=latex,out=70,in=110,loop,thick] node[above,pos=0.5]{$\scriptstyle \alpha$} (vertexe);
\node at (1,-6.2) {$\scriptstyle (0,i,i)$};
\node[vertex] (vertexd) at (1,-6.2)   {$\,\quad$}
	edge [->,>=latex,out=250,in=290,loop,thick] node[below,pos=0.5]{$\scriptstyle \alpha$} (vertexd);
\node at (1,-3.1) {$\scriptstyle (0,0,0)$};
\node[vertex] (vertexa) at (1,-3)   {$\,\quad$}
	edge [->,>=latex,out=70,in=110,loop,thick] node[above,pos=0.5]{$\scriptstyle \alpha$} (vertexa)
	edge [->,>=latex,out=60,in=120,loop,thick,looseness=10] node[above,pos=0.5]{$\scriptstyle \beta$} (vertexa);
\node at (-1.1,-4) {$\scriptstyle (i,i,i)$};
\node[vertex] (vertexb) at (-1,-4)   {$\quad$}
	edge [->,>=latex,out=110,in=150,loop,thick] node[above,pos=0.5]{$\scriptstyle \gamma$} (vertexb)
	edge [->,>=latex,out=300,in=160,thick] node[right,pos=0.4]{$\scriptstyle \gamma$} (vertexe)
	edge [->,>=latex,out=285,in=170,thick] node[right,pos=0.5]{$\scriptstyle \gamma$} (vertexd)
	edge [<-,>=latex,out=270,in=185,thick] node[left,pos=0.5]{$\scriptstyle \beta$} (vertexd)
	edge [<-,>=latex,out=254,in=200,thick,looseness=1.2] node[left,pos=0.5]{$\scriptstyle \alpha$} (vertexd) 
	edge [->,>=latex,out=50,in=190,thick] node[above,pos=0.5]{$\scriptstyle \gamma$} (vertexa);
\node at (3.1,-4) {$\scriptstyle (0,i,0)$};
\node[vertex] (vertexc) at (3,-4)   {$\quad$}
	edge [->,>=latex,out=70,in=30,loop,thick] node[above,pos=0.5]{$\scriptstyle \alpha$} (vertexb)
	edge [->,>=latex,out=240,in=20,thick] node[left,pos=0.4]{$\scriptstyle \beta$} (vertexe)
	edge [<-,>=latex,out=130,in=350,thick] node[above,pos=0.5]{$\scriptstyle \beta$} (vertexa)
	edge [->,>=latex,out=260,in=10,thick] node[right,pos=0.5]{$\scriptstyle \beta$} (vertexd)
	edge [<-,>=latex,out=180,in=0,thick] node[above,pos=0.5]{$\scriptstyle \gamma$} (vertexb);
\end{tikzpicture}
\end{center}
\caption{Graph of nodes the different triangle types, connected by all possible $\alpha$, $\beta$ and $\gamma$-moves, see Figure \ref{fig: 2-components}. An infinite path in this graph, which does not end in an infinite tail of $\alpha$ or infinite tail of $\gamma$, defines a tiling with $\ell = 2$ components of \RI-edge graph.}
\label{fig: SFT}
\end{figure}

We can give a precise description of which tilings have $\ell = 2$ connected components of \RI-edge graph. A triangle (positioned with horizontal bottom edge) may have edges directed inwards or outwards, to which we associate a tuple $(l,r,b) \in \{i,o\}^3$, with the first, second and third coordinate corresponding to the left, right and bottom edge, respectively, $o$ denoting `outwards' and $i$ denoting `inwards'. Because of the placement of triangles, we may only apply an $\alpha$ or $\beta$ move to an $(o,x,y)$ triangle (where $x$, $y \in \{0,i\}$ may be chosen arbitrarily), and a $\gamma$ move may only be applied to an $(i,i,i)$ triangle (in particular, we cannot have an $(i,x,y)$ triangle unless $x=y=i$). Applying an $\alpha$ move to an $(o,x,y)$ triangle results in a $(z,x,y)$ triangle, whereas $\beta$ results in a $(y,z,x)$ triangle, both for $z$ arbitrary (where, for the latter, note that we rotate the resulting configuration making $X$ the bottom edge, since in the new triangle the edges $Y$ are in the same component and take on the roles of the $X$-labelled edges from the step before). A $\gamma$-move can lead to any triangle type.

Thus, any path around the graph of Figure \ref{fig: SFT} builds an \RI-tiling with $X$ and $Y$ in different path-components. Moreover, so long as the path does not end in repeated applications of $\alpha$ (which would mean that $T$ is a {\bf CHT} tiling, if $T \in \Osub$) or of $\gamma$ (which would result in a $P_\infty$ tiling), then it is easily seen that the sequence of supertiles covers the plane. Moreover, in this case, since two edges of a triangle are reconnected at either an $\alpha$ or $\beta$ move, we see that the resulting tiling has precisely $2$ connected components. Conversely, a valid tiling with two components (without bi-infinite \RI-line), is covered by supertiles whose placements eventually follow an infinite path in the graph of Figure \ref{fig: SFT} and does not have an infinite tail of only $\alpha$ or only $\gamma$.

Having determined the multiplicities of fibres to $\Oa$ (and the MEF), we now compare the situation to the Penrose $(1+\epsilon+\epsilon^2)$ and Socolar--Taylor tilings which, despite not being MLD (the \v{C}ech cohomologies of $\Ost$ and $\Om_\epsilon$ differ \cite{BGG12}), somewhat amazingly map to the MEF with fibres of identical multiplicity \cite{BGG12,LeeMoo13}. We summarise with the table in Figure \ref{fig: table}

\begin{figure}
\begin{tabular}{ |p{3cm}||p{3cm}|p{3cm}|p{3cm}|p{3cm}  }
 \hline
  & \multicolumn{2}{|c|}{Size of fibre of $\ast \to \Oa$} & \\
 \hline
   Type of $\mathbb{S}_2^2$ point & $\Ost$ and $\Oe$ & $\Osub^\pm$ & Size of fibre of $\Oa \to \mathbb{S}^2_2$ \\
 \hline\hline
 {\bf CHT} (w and a-sing) & $2$ & $4$         & $6$ \\
 \hline
 $P_\infty$ ({\bf iCW-L})     & $6$ & $3$         & $1$ \\
 \hline
 a-gen, w-gen             & $1$ & $1$ or $2$  & $1$ \\
 \hline
 a-sing, w-gen            & $1$ & $1$         & $2$ \\
 \hline
 a-gen, w-sing            & $2$ & $1$ or $2$  & $1$ \\
 \hline
\end{tabular}
\caption{Comparing fibre multiplicities with the Penrose $(1+\epsilon+\epsilon^2)$ and Socolar--Taylor tilings.}
\label{fig: table}
\end{figure}

In table in Figure \ref{fig: table}, `gen' is shorthand for `generic' and `sing' for `singular'. The a-lines of a point in the solenoid (or associated tiling) are defined in \cite{LeeMoo13}; they correspond to the locations of the (non-offset) \RI-edges, which are arranged periodically, and more sparsely at increasing levels of the hierarchy. Tilings where every $a$-line has a finite level are called a-generic, otherwise they are a-singular (that is, a tiling is a-singular if it has an infinite \RI-line). The w-lines are defined somewhat similarly. They form another triangular grid at each level, each line cutting through lines of reflective symmetry of the triangles of a-lines, and are related to the carry of information of flags (or colours) in the \RII-rule of the Socolar--Taylor tilings. We refer the reader to \cite{LeeMoo13} for more details on these notions. The first column of values of the above table are given in \cite{LeeMoo13} and \cite{BGG12}.

For the second column of values, the first two rows are the content of Lemma \ref{lem: connected components}(1). For the third row it is not hard to see (analogously to the proof that almost all tilings have $\ell = 1$ components of \RI-edge graph in the proof of Lemma \ref{lem: connected components}(3)) that almost all tilings have no infinite $a$- or $w$-lines (these are also called \emph{generic} in \cite{LeeMoo13}). So almost every tiling is generic and has $\ell = 1$, although we may also construct $a$- and $w$- generic tilings with $\ell = 2$, which will happen for most infinite paths in the graph of Figure \ref{fig: SFT} (the path $\beta^\infty$ gives just one example). The fourth row (a-sing, w-gen) corresponds to tilings with an infinite \RI-line which are not {\bf CHT}, which have $\ell = 1$ by Lemma \ref{lem: connected components}. Finally, we may find tilings without infinite \RI-edges but an infinite $w$-line with either $\ell = 1$ or $\ell = 2$. Following the path $\alpha \gamma^6 \alpha \gamma^6 \alpha \gamma^6 \cdots$ of the graph in Figure \ref{fig: SFT} constructs an example with $\ell = 2$. Examples with $\ell = 1$ are simple to construct, by successively placing supertiles with common lines of reflection and not following a path in the graph of Figure \ref{fig: SFT}.

Since $\Osub^\pm$ factors almost everywhere one-to-one to its MEF, it has pure point dynamical and diffraction spectrum (see, for example \cite{LMS02, Dwo93}), and a regular model set structure by \cite[Theorem 6]{BLM07}. This factor map has different multiplicity of fibres to $\Ost$ and $\Oe$ and so cannot be topologically conjugate to either of them. We summarise these main results together with results from \cite{BGG12,LeeMoo13}:

\begin{cor}
We have the following diagram of maps of $\R^2$-dynamical systems:
\[
\begin{tikzcd}
                             & \Oe \ar[dr] &            &             & \\ 
\Om                          & \Ost \ar[r] & \Oa \ar[r] & \Ohh \ar[r] & \mathbb{S}^2_2 \\
\Osub \ar[u,hook] \ar[r,"f"] & \Osub^\pm \ar[ur]        &             & 
\end{tikzcd}
\]
Each map is a factor map, except for the inclusion $\Osub \hookrightarrow \Om$, which has image of full measure. All factor maps are almost everywhere one-to-one, except for the map $f$ which is a $2$-to-$1$ covering map. In particular, $(\Om,\R^2)$ and $(\Osub,\R^2)$ do not have pure point dynamical spectrum, but $(\Osub^\pm,\R^2)$ does. Moreover, none of these systems are topologically conjugate. The tilings spaces $\Oe$, $\Ost$, $\Oa$ and $\Osub^\pm$ each have the structure of regular model sets.
\end{cor}

\bibliographystyle{siam}
\bibliography{biblio}

\begin{thebibliography}{10}

\bibitem{BGG12}
{\sc M.~Baake, F.~G\"{a}hler, and U.~Grimm}, {\em Hexagonal inflation tilings
  and planar monotiles}, Symmetry, 4 (2012), pp.~581--602.

\bibitem{AObook}
{\sc M.~Baake and U.~Grimm}, {\em Aperiodic order. {V}ol. 1}, vol.~149 of
  Encyclopedia of Mathematics and its Applications, Cambridge University Press,
  Cambridge, 2013.
\newblock A mathematical invitation, With a foreword by Roger Penrose.

\bibitem{BLM07}
{\sc M.~Baake, D.~Lenz, and R.~V. Moody}, {\em Characterization of model sets
  by dynamical systems}, Ergodic Theory Dynam. Systems, 27 (2007),
  pp.~341--382.

\bibitem{BarKel13}
{\sc M.~Barge and J.~Kellendonk}, {\em Proximality and pure point spectrum for
  tiling dynamical systems}, Michigan Math. J., 62 (2013), pp.~793--822.

\bibitem{Ber66}
{\sc R.~Berger}, {\em The undecidability of the domino problem}, Mem. Amer.
  Math. Soc. No., 66 (1966), p.~72.

\bibitem{Dwo93}
{\sc S.~Dworkin}, {\em Spectral theory and x-ray diffraction}, J. Math. Phys.,
  34 (1993), pp.~2965--2967.

\bibitem{FerOll2010}
{\sc T.~Fernique and N.~Ollinger}, {\em Combinatorial substitutions and sofic
  tilings}, in Second Symposium on Cellular Automata ``Journ\'ees Automates
  Cellulaires'', {JAC} 2010, Turku, Finland. Proceedings, 2010, pp.~100--110.

\bibitem{Goo98}
{\sc C.~Goodman-Strauss}, {\em Matching rules and substitution tilings}, Ann.
  of Math. (2), 147 (1998), pp.~181--223.

\bibitem{GruShe87}
{\sc B.~Gr\"{u}nbaum and G.~C. Shephard}, {\em Tilings and patterns}, W. H.
  Freeman and Company, New York, 1987.

\bibitem{Gum96}
{\sc P.~Gummelt}, {\em Penrose tilings as coverings of congruent decagons},
  Geom. Dedicata, 62 (1996), pp.~1--17.

\bibitem{LeeMoo13}
{\sc J.-Y. Lee and R.~V. Moody}, {\em Taylor-{S}ocolar hexagonal tilings as
  model sets}, Symmetry, 5 (2013), pp.~1--46.

\bibitem{LMS02}
{\sc J.-Y. Lee, R.~V. Moody, and B.~Solomyak}, {\em Pure point dynamical and
  diffraction spectra}, Ann. Henri Poincar\'{e}, 3 (2002), pp.~1003--1018.

\bibitem{MamWhi19}
{\sc M.~Mampusti and M.~F. Whittaker}, {\em An aperiodic monotile that forces
  nonperiodicity through dendrites}, Bull. Lond. Math. Soc., 52 (2020),
  pp.~942--959.

\bibitem{Moz89}
{\sc S.~Mozes}, {\em Tilings, substitution systems and dynamical systems
  generated by them}, J. Analyse Math., 53 (1989), pp.~139--186.

\bibitem{Pen79}
{\sc R.~Penrose}, {\em Pentaplexity: a class of nonperiodic tilings of the
  plane}, Math. Intelligencer, 2 (1979/80), pp.~32--37.

\bibitem{PenTwstr}
{\sc R.~Penrose}, {\em Twistor newsletters}, 1996-97.
\newblock \newline {\bf 41},
  \url{http://people.maths.ox.ac.uk/lmason/Tn/41/TN41-08.pdf} (1996), \newline
  {\bf 42}, \url{http://people.maths.ox.ac.uk/lmason/Tn/42/TN42-09.pdf} (1997),
  \newline {\bf 43},
  \url{http://people.maths.ox.ac.uk/lmason/Tn/43/TN43-11.pdf} (1997).

\bibitem{Pen95}
\leavevmode\vrule height 2pt depth -1.6pt width 23pt, {\em Remarks on tiling:
  details of a {$(1+\epsilon+\epsilon^2)$}-aperiodic set}, in The mathematics
  of long-range aperiodic order ({W}aterloo, {ON}, 1995), vol.~489 of NATO Adv.
  Sci. Inst. Ser. C Math. Phys. Sci., Kluwer Acad. Publ., Dordrecht, 1997,
  pp.~467--497.

\bibitem{Rob71}
{\sc R.~M. Robinson}, {\em Undecidability and nonperiodicity for tilings of the
  plane}, Invent. Math., 12 (1971), pp.~177--209.

\bibitem{Sad08}
{\sc L.~Sadun}, {\em Topology of tiling spaces}, vol.~46 of University Lecture
  Series, American Mathematical Society, Providence, RI, 2008.

\bibitem{ShBlGrCa84}
{\sc D.~Shechtman, I.~Blech, D.~Gratias, and J.~W. Cahn}, {\em Metallic phase
  with long-range orientational order and no translational symmetry}, Phys.
  Rev. Lett., 53 (1984), pp.~1951--1953.

\bibitem{SocTay11}
{\sc J.~E.~S. Socolar and J.~M. Taylor}, {\em An aperiodic hexagonal tile}, J.
  Combin. Theory Ser. A, 118 (2011), pp.~2207--2231.

\bibitem{SocTay12}
\leavevmode\vrule height 2pt depth -1.6pt width 23pt, {\em Forcing
  nonperiodicity with a single tile}, Math. Intelligencer, 34 (2012),
  pp.~18--28.

\bibitem{Tay10}
{\sc J.~M. Taylor}, {\em Aperiodicity of a functional monotile}, Preprint,
  (2010).

\bibitem{Wan61}
{\sc H.~Wang}, {\em {Proving Theorems by Pattern Recognition ---
  \uppercase{II}}}, Bell System Technical Journal, 40 (1961), pp.~1--41.

\end{thebibliography}

\end{document}